\flushbottom \setlength{\parindent}{0pt} \setlength{\parskip}{0cm}
\newtheorem{theorem}{Theorem}[section]
\newtheorem{lemma}[theorem]{Lemma}
\newtheorem{remark}[theorem]{Remark}
\newtheorem{prop}[theorem]{Proposition}
\begin{document}

\def\cA{{\mathcal A}}
\def\E{\mathbf E}
\def\F{\mathbf F}
\def\P{\mathbf P}
\def\N{\mathbf N}
\def\Z{\mathbf Z}
\def\K{K}
\def\L{\mathbf L}
\def\Q{\mathbf Q}
\def\U{\mathbf U}
\def\V{\mathbf V}
\def\W{\mathbf W}
\def\X{\mathbf X}
\def\Y{\mathbf Y}
\def\cB{\mathcal B}
\def\cC{\mathcal C}
\def\cD{\mathcal D}
\def\cE{\mathcal E}
\def\cF{{\mathcal F}}
\def\cG{{\mathcal G}}
\def\cH{\mathcal H}
\def\cL{\mathcal L}
\def\cM{{\mathcal M}}
\def\cK{\mathcal K}
\def\cO{\mathcal O}
\def\cP{\mathcal P}
\def\cX{\mathcal X}
\def\cY{\mathcal Y}
\def\cU{\mathcal U}
\def\cV{\mathcal V}
\def\cT{\mathcal T}
\def\cR{\mathcal R}
\def\cS{{\mathcal S}}
\def\cW{\mathcal W}
\def\cFR{\mathcal{Fr}}
\def\PG{{\rm PG}}
\def\PGL{{\rm PGL}}
\def\GF{{\rm GF}}
\def\AG{{\rm AG}}
\def\GL{{\rm GL}}
\def\char{{\rm char}}
\def\deg{{\rm deg}}
\def\det{{\rm det}}
\def\diag{{\rm diag}}
\def\dim{{\rm dim}}
\def\div{{\rm div}}
\def\fq{\mathbf F_{q}}
\def\fqi{\mathbf F_{q^i}}
\def\fqd{\mathbf F_{q^d}}
\def\frob{{\em Fr}}
\def\ord{{\rm ord}}
\def\rank{{\rm rank}}
\def\res{{\rm res}}
\def\sec{\text{\it Sec\, }}
\def\sq{\sqrt{q}}
\def\supp{{\rm Supp}}
\def\Prf{{Proof. \quad}}
\def\hxi{{\xi}^{\cH}}
\def\hgamma{{\gamma}^{cH}}
\def\hS{{S}^{cH}}
\def\PO{{\rm P\Omega^-}}
\title{New Cameron--Liebler line classes with parameter $\frac{q^2+1}{2}$}
\author{A. Cossidente \and F. Pavese}
\date{}
\maketitle

\begin{abstract}
New families of Cameron--Liebler line classes of $\PG(3,q)$, $q\ge 7$ odd, with parameter $(q^2+1)/2$
are constructed.
\end{abstract}

{\bf Mathematics Subject Classification:} 51E20, 05B25, 05E30

{\bf Keywords:} Cameron--Liebler line class; tight set; Klein quadric.

\section{Introduction} \label{sect:intro}

In this paper we deal with Cameron--Liebler line classes of $\PG(3,q)$. The notion of Cameron--Liebler line class was introduced in the seminal paper \cite{CL} with the purpose of classifying those collineation groups of $\PG(3,q)$ having the same number of orbits on points and lines. On the other hand, a classification of Cameron--Liebler line classes would yield a classification of symmetric tactical decompositions of points and lines of $\PG(n,q)$ and that of certain families of weighted point sets of $\PG(3,q)$ \cite{P}. Cameron--Liebler line classes are also related to other combinatorial structures such as two--intersection sets, strongly regular graphs and two--weight codes \cite{BKLP}, \cite{MR}.

In $\PG(3,q)$, a {\em Cameron--Liebler line class} $\cal L$  with parameter $x$ is a set of lines such that every spread of $\PG(3,q)$ contains exactly $x$ lines of $\cal L$, \cite{CL}, \cite{P}. There exist trivial examples of Cameron--Liebler line classes $\cal L$ with parameters $x=1,2$ and $x=q^2,q^2-1$. A Cameron--Liebler line class with parameter $x=1$ is either the set of lines through a point or the set of lines in a plane. A Cameron--Liebler line class with parameter $x=2$ is the union of the two previous example, if the point is not in the plane \cite{CL}, \cite{P}. In general, the complement of a Cameron--Liebler line class with parameter $x$ is a Cameron--Liebler line class with parameter $q^2+1-x$ and the union of two disjoint Cameron--Liebler line classes with parameters $x$ and $y$, respectively, is a Cameron--Liebler line class with parameter $x+y$. 

It was conjectured that no other examples of Cameron--Liebler line classes exist \cite{CL}, but Bruen and Drudge \cite{BD} found an infinite family of Cameron--Liebler line classes with parameter $x=(q^2+1)/2$, $q$ odd. Bruen--Drudge's example admits the group $G=\PO(4,q)$, stabilizing an elliptic quadric ${\cal Q}^-(3,q)$ of $\PG(3,q)$, as an automorphism group. 

In \cite{GP}, Govaerts and Penttila found a Cameron--Liebler line class with parameter $x=7$ in $\PG(3,4)$. 

Infinite families of Cameron--Liebler line classes with parameter $(q^2-1)/2$ were found for $q \equiv 5$ or $9\pmod{12}$ in \cite{DDMR}, \cite{FMX}. By construction, such a family $\cal X$ never contains the lines $\cal Y$ in a plane $\Pi$ and the lines $\cal Z$ through a point $z \not\in \Pi$. Therefore, ${\cal X}\cup{\cal Y}$ and ${\cal X}\cup{\cal Z}$  are both examples of  Cameron--Liebler line classes with parameter $(q^2+1)/2$.

For non--existence results of Cameron--Liebler line classes see \cite{GM}, \cite{M} and references therein.

Recently, by perturbating the Bruen--Drudge's example, a new infinite family of Cameron--Liebler line classes with parameter $(q^2+1)/2$, $q\ge 5$ odd, has been constructed independently in \cite{CP} and \cite{GMP}. Such a family admits the stabilizer of a point of ${\cal Q}^-(3,q)$ in the group $G$, say $K$.

Here, we introduce a new derivation technique for Cameron--Liebler line classes with parameter $(q^2+1)/2$, see Theorem \ref{cl}. Applying such a derivation to the Bruen--Drudge's example, we construct a new family of Cameron--Liebler line classes with parameter $(q^2+1)/2$, $q\ge 7$ odd, not equivalent to the examples known so far and admitting a subgroup of $K$ of order $q^2(q+1)$.

Under the Klein correspondence between the lines of $\PG(3,q)$  and points of a Klein quadric ${\cal Q}^+(5,q)$,  a Cameron--Liebler line class with parameter $i$ gives rise to a so--called {\em $i$--tight set} of ${\cal Q}^+(5,q)$.

A set $\cT$ of points of ${\cal Q}^+(5,q)$ is said to be {\em $i$--tight} if
$$
\vert P^\perp\cap{\cal T}\vert=
\begin{cases} i(q+1)+q^2 & if \quad P\in {\cal T}\\
i(q + 1) & if \quad P\not\in {\cal T}\end{cases},
$$
where $\perp$ denotes the polarity of $\PG(5,q)$ associated with ${\cal Q}^+(5,q)$. 
	
For more results on tight sets of polar spaces, see \cite{BKLP}.

Throughout the paper $q$ is a power of an odd prime.

	\section{The Bruen--Drudge's example}

Let $X_1,\dots, X_4$ be homogeneous projective coordinates in $\PG(3,q)$. Let $\omega$ be a non--square element of $\GF(q)$. Let $\cE$ be the elliptic quadric of $\PG(3,q)$ with equation $X_1^2-\omega X_2^2+X_3X_4=0$ and quadratic form $Q$.  Each point $P\in\cE$ lies on $q^2$ secant lines to $\cE$, and so lies on $q+1$ tangent
lines. Let ${\cal L}_P$ be a set of $(q+1)/2$ tangent lines to $\cE$ through $P$, and let $E$ be 
the set of external lines to $\cE$; then the set
$$
\bigcup_{P\in\cE}{\cal L}_P \cup E
$$
has size $(q^2+1)(q^2+q+1)/2$ which is the number of lines of $\PG(3,q)$ in a Cameron--Liebler line class with parameter $(q^2+1)/2$. By suitably selecting the sets ${\cal L}_P$, it is in fact a Cameron--Liebler line class \cite{BD}. Let $G=\PO(4,q)$ be the commutator subgroup of the full stabilizer of $\cE$ in $\PGL(4,q)$. The group $G$ has three orbits on points of $\PG(3,q)$, i.e., the points of $\cE$ and other two orbits $O_s$ and $O_n$, both of size $q^2(q^2+1)/2$. The two orbits $O_s$, $O_n$ correspond to points of $\PG(3,q)$ such that the evaluation of the quadratic form $Q$ is a non--zero square or a non--square in $\GF(q)$, respectively. 
We say that a point $X$ of $\PG(3,q)$ is a {\em square point} or a {\em non--square point} with respect to a given quadratic form $F$ according as the evaluation $F(X)$ is a non--zero square or a non--square of $\GF(q)$. 
In its action on lines of $\PG(3,q)$, the group $G$ has four orbits: two orbits, say ${\cal L}_0$ and ${\cal L}_1$, both of size $(q+1)(q^2+1)/2$, consisting of lines tangent to $\cE$ and two orbits, say ${\cal L}_2$ and ${\cal L}_3$, both of size $q^2(q^2+1)/2$ consisting of lines secant or external to $\cE$, respectively.
The block-tactical decomposition matrix for this orbit decomposition
is $$\left[\begin{array}{ r r r r} 1&1&2&0\\
q&0&\frac{q-1}{2}&\frac{q+1}{2}\\
0&q&\frac{q-1}{2}&\frac{q+1}{2}\\
\end{array}\right],$$

and hence the point-tactical decomposition matrix is

$$\left[\begin{array}{ r r r r} \frac{q+1}{2}&\frac{q+1}{2}&q^2&0\\
q+1&0&\frac{q(q-1)}{2}&\frac{q(q+1)}{2}\\
0&q+1&\frac{q(q-1)}{2}&\frac{q(q+1)}{2}\\
\end{array}\right].$$

Simple group--theoretic arguments show that a line of ${\cal L}_0$ (${\cal L}_1$) contains $q$ points of $O_s$ ($O_n$), that a line secant to $\cE$ always contains $(q-1)/2$ points of $O_s$ and $(q-1)/2$ points of $O_n$ and that a line external to $\cE$ contains  $(q+1)/2$ points of $O_s$ and $(q+1)/2$ points of $O_n$. From the orbit--decompositions above it is an easy matter to prove the following results:
\begin{lemma}\label{lemma1}
Let $\ell$ be a line that is either secant or external to $\cE$. Then the number of lines of $\cL_0$ (or of $\cL_1$) meeting $\ell$ equals $(q+1)^2/2$. 
\end{lemma}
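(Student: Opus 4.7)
The plan is to count, for $\ell$ of the given type, the number of lines of $\cL_0$ that meet $\ell$ by summing, over the points $P\in \ell$, the number of lines of $\cL_0$ through $P$. Since $\ell$ is secant or external to $\cE$, we have $\ell\notin \cL_0\cup\cL_1$, so no line of $\cL_0$ coincides with $\ell$; consequently every line of $\cL_0$ meeting $\ell$ does so in exactly one point and the double-sum really equals the incidence count I want.

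I would then split according to which $G$-orbit each point of $\ell$ belongs to. Using the stated distribution of points on $\ell$ (for a secant: $2$ points of $\cE$, $(q-1)/2$ in $O_s$, $(q-1)/2$ in $O_n$; for an external line: no point of $\cE$, $(q+1)/2$ in $O_s$, $(q+1)/2$ in $O_n$) together with the first column of the point-tactical decomposition matrix (each point of $\cE$ lies on $(q+1)/2$ lines of $\cL_0$, each point of $O_s$ on $q+1$ lines of $\cL_0$, and each point of $O_n$ on none), the count for a secant $\ell$ becomes
\[
2\cdot\frac{q+1}{2} + \frac{q-1}{2}\cdot(q+1) + \frac{q-1}{2}\cdot 0 = (q+1) + \frac{q^2-1}{2} = \frac{(q+1)^2}{2},
\]
and for an external $\ell$ it becomes
\[
0 + \frac{q+1}{2}\cdot(q+1) + 0 = \frac{(q+1)^2}{2}.
\]

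For $\cL_1$ the argument is identical after interchanging the roles of $O_s$ and $O_n$, since the corresponding column of the point-tactical decomposition matrix is obtained from the $\cL_0$-column by that swap, while the point-distribution on $\ell$ is symmetric in $O_s$ and $O_n$ in both cases.

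There is essentially no obstacle: the proof is a one-line double count, the only thing to verify carefully being that no line of $\cL_0\cup\cL_1$ is excluded or double-counted, which is immediate from $\ell$ being secant or external and hence not tangent.
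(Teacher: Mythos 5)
Your proof is correct and is essentially the argument the paper intends: the lemma is stated there as an immediate consequence of the orbit decompositions, and the intended verification is exactly your double count of incidences using the point-tactical decomposition matrix together with the distribution of the points of $\ell$ among $\cE$, $O_s$ and $O_n$. Your observation that $\ell\notin\cL_0\cup\cL_1$ guarantees each line of $\cL_0$ (or $\cL_1$) meets $\ell$ in exactly one point is the only point needing care, and you handle it correctly.
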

\begin{lemma}\label{lemma2}
Let $\ell$ be a line of $\cL_0$ (resp. $\cL_1$). Then 
\begin{itemize}
\item the number of lines of $\cL_0$ (resp. $\cL_1$) meeting $\ell$ in a point equals $q^2+(q-1)/2$; 
\item the number of lines of $\cL_1$ (resp. $\cL_0$) meeting $\ell$ in a point equals $(q+1)/2$; 
\end{itemize}
\end{lemma}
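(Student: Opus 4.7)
The plan is a straightforward double count based on the point-tactical decomposition matrix together with the facts recorded just above the lemma about the intersections of the tangent-line orbits with the point orbits. First I observe that a line $\ell \in \cL_0$ meets $\cE$ in exactly one point $P$, its remaining $q$ points all lie in $O_s$, and no point of $O_n$ is on $\ell$ (and symmetrically if $\ell \in \cL_1$).

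To count lines of $\cL_0$ meeting $\ell$ in a point, I sum over the $q+1$ points of $\ell$ the number of lines of $\cL_0$ through each, then subtract the contribution of $\ell$ itself. Since two distinct lines of $\PG(3,q)$ meet in at most one point, every line of $\cL_0 \setminus \{\ell\}$ is counted exactly once by this sum. Reading the multiplicities off the point-tactical decomposition matrix---namely $(q+1)/2$ lines of $\cL_0$ through a point of $\cE$ and $q+1$ lines of $\cL_0$ through a point of $O_s$---yields the announced $q^2 + (q-1)/2$.

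For the count of lines of $\cL_1$ meeting $\ell$, the same scheme applies, except that now only the point $P \in \cE$ contributes: the other $q$ points of $\ell$ lie in $O_s$, and the matrix shows that no line of $\cL_1$ passes through a point of $O_s$. The entry $(q+1)/2$ of the matrix then gives the claim directly, and no subtraction is needed because $\ell \notin \cL_1$. The case $\ell \in \cL_1$ is symmetric under the exchange $\cL_0 \leftrightarrow \cL_1$, $O_s \leftrightarrow O_n$.

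I do not foresee a real obstacle; the argument is pure bookkeeping. The only sanity check is the assertion that no point of $O_n$ lies on an $\cL_0$-line (and vice versa for $\cL_1$), but this is forced by the point distribution already stated: an $\cL_0$-line has $1+q = q+1$ points, all accounted for in $\cE \cup O_s$.
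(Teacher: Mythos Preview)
Your proof is correct and is precisely the bookkeeping the paper has in mind: the paper does not spell out the argument but simply says that Lemma~\ref{lemma1} and Lemma~\ref{lemma2} follow ``from the orbit--decompositions above'', i.e., from the point-tactical decomposition matrix combined with the stated point distribution on tangent lines. Your double count over the $q+1$ points of $\ell$, subtracting the $(q+1)$-fold overcount of $\ell$ itself, is exactly this.
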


Let ${\cal L'}={\cal L}_0\cup{\cal L}_3$. Using Lemma \ref{lemma1} and Lemma \ref{lemma2}, it can be seen that ${\cal L'}$ is the Cameron--Liebler line class constructed in \cite{BD}. 

In particular, ${\cal L'}$ has the following three characters with respect to line--sets in planes of $\PG(3,q)$: 
$$
q^2+\frac{q+1}{2}, \frac{q(q-1)}{2}, \frac{q(q+1)}{2}+1, 
$$
and 
$$
\frac{q+1}{2}, \frac{q(q+1)}{2}, \frac{q(q+1)}{2}+q+1,
$$
with respect to line--stars of $\PG(3,q)$.

Consider a point of $\cE$ (since $G$ is transitive on $\cE$ we can choose it as the point $U_3=(0,0,1,0)$) and let $\pi$ be the plane with equation $X_4=0$. Then $\pi$ is tangent to $\cE$ at the point $U_3$. The plane $\pi$ contains a set ${\cal L}_3'$ consisting of $q^2$ lines of ${\cal L}_3$. Let ${\cal L}_2'$ be the set of $q^2$ lines of ${\cal L}_2$ through $U_3$. In \cite{CP}, we showed that ${\cal L}''=({\cal L'}\setminus {\cal L}_3') \cup {\cal L}_2'$ is a Cameron--Liebler line class of with parameter $(q^2+1)/2$, admitting the group $K$ as an automorphism group.

In particular, ${\cal L}''$ has the following five characters with respect to line--sets in planes of $\PG(3,q)$:
$$
\frac{q+1}{2}, \frac{q(q-1)}{2}-1, \frac{q(q+1)}{2}, \frac{q(q+1)}{2}+q+1, q^2+\frac{q-1}{2},
$$
and 
$$
\frac{q+3}{2}, \frac{q(q-1)}{2}, \frac{q(q+1)}{2}+1, \frac{q(q+1)}{2}+q+2, q^2+\frac{q+1}{2},
$$ 
with respect to line--stars of $\PG(3,q)$. It turns out that, if $q >3$, these characters are distinct from those of a Bruen--Drudge Cameron--Liebler line class.

\section{The new family}

In this section we introduce a new derivation technique which will allow us to construct a new infinite family of Cameron--Liebler line classes with parameter $(q^2+1)/2$, $q \ge 7$. 

With the notation introduced in the previous section, let $\cE_{\lambda}$ be the elliptic quadric with equation $X_1^2-\omega X_2^2+\lambda X_4^2+X_3X_4=0$, $\lambda\in\GF(q)$. Then the non--degenerate quadrics $\cE=\cE_0,\cE_{\lambda}$, $\lambda\in\GF(q)\setminus\{0\}$, together with $\pi$, form a pencil $\cal P$ of $\PG(3,q)$. The base locus of $\cal P$ is the point $U_3$.
Let $\perp$ (resp. $\perp_{\lambda}$) be the orthogonal polarity associated to $\cE$ (resp. $\cE_{\lambda}$).
Note that $U_3^{\perp_{\lambda}}=\pi$, $\forall \lambda\in\GF(q)\setminus\{0\}$. 

With a slight abuse of notation we will denote in the same way a plane and the set of lines contained in it. 
\begin{lemma}\label{square1}
If $P \in O_s$, then  
$$
\vert P^\perp \cap \cL_0 \vert=
\begin{cases} 
q+1 & \mbox{ if } \quad q \equiv -1 \pmod 4 \\
0 & \mbox{ if } \quad q \equiv 1 \pmod 4
\end{cases} , 
$$   
$$
\vert P^\perp \cap \cL_1 \vert=
\begin{cases} 
0 & \mbox{ if } \quad q \equiv -1 \pmod 4 \\
q+1 & \mbox{ if } \quad q \equiv 1 \pmod 4
\end{cases} .
$$   
If $P \in O_n$, then  
$$
\vert P^\perp \cap \cL_0 \vert=
\begin{cases} 
0 & \mbox{ if } \quad q \equiv -1 \pmod 4 \\
q+1 & \mbox{ if } \quad q \equiv 1 \pmod 4
\end{cases} , 
$$   
$$
\vert P^\perp \cap \cL_1 \vert=
\begin{cases} 
q+1 & \mbox{ if } \quad q \equiv -1 \pmod 4 \\
0 & \mbox{ if } \quad q \equiv 1 \pmod 4
\end{cases} .
$$   
\end{lemma}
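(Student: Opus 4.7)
The plan is to exploit the geometry of the polar plane $P^\perp$. Since $P\notin\cE$, the plane $P^\perp$ does not contain $P$ and meets $\cE$ in a non-degenerate conic $\cC$. The tangent lines to $\cE$ lying in $P^\perp$ are precisely the tangent lines to $\cC$ in $P^\perp$, so there are exactly $q+1$ of them; in particular $|P^\perp\cap(\cL_0\cup\cL_1)|=q+1$, and only the split between $\cL_0$ and $\cL_1$ has to be determined.

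Using the characterisation recalled just before Lemma \ref{lemma1} (a tangent line $\ell$ at $R\in\cE$ belongs to $\cL_0$ or $\cL_1$ according as its $q$ non-$\cE$ points lie in $O_s$ or $O_n$), combined with the identity $Q(\alpha R+\beta S)=\beta^{2}Q(S)$ for any $S\in\ell\setminus\{R\}$ (valid because $Q(R)=0$ and $S\in R^\perp$), it suffices to determine, for each tangent line $\ell\subset P^\perp$, the square class of $Q(S)$ for any $S\in\ell\setminus\{R\}$.

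The main step is to prove that this square class is the same for every tangent line inside $P^\perp$. The restriction $Q':=Q|_{P^\perp}$ is a non-degenerate ternary form with $\cC=\{Q'=0\}$; since any two such forms are proportional, in suitable coordinates on $P^\perp$ one has $Q'=\alpha\,Q_{0}$ with $Q_{0}(X,Y,Z)=X^{2}+YZ$ and some $\alpha\in\GF(q)^*$. A short direct verification (using e.g.\ the parameterisation $R(t)=(t,1,-t^{2})$ of $\{Q_{0}=0\}$ together with $S=(1,0,-2t)$ on the tangent at $R(t)$, and similarly at $(0,0,1)$) shows that $Q_{0}$ takes non-zero \emph{square} values at every non-tangent-point of every tangent line. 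Hence for every tangent line $\ell\subset P^\perp$ and every $S\in\ell\setminus\{R\}$, $Q(S)\in\alpha(\GF(q)^*)^{2}$, and the common square class of all $q+1$ tangent lines in $P^\perp$ is that of $\alpha$.

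It then remains to compute $\alpha$ modulo squares. From the orthogonal decomposition $\GF(q)^{4}=\langle P\rangle\oplus P^\perp$ (valid since $Q(P)\neq 0$) one gets $\det(Q)\equiv Q(P)\det(Q')\pmod{(\GF(q)^*)^{2}}$; combining this with $\det(Q)\equiv\omega$, $\det(Q_{0})\equiv -1$ and $\alpha^{3}\equiv\alpha$ yields $\alpha\equiv -\omega\,Q(P)$ modulo squares. For $P\in O_s$ one then has $\alpha\equiv -\omega$, which is a square iff $-1$ is a non-square (since $\omega$ is a non-square), i.e.\ iff $q\equiv -1\pmod 4$; for $P\in O_n$ one has $\alpha\equiv -1$, which is a square iff $q\equiv 1\pmod 4$. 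In each case the whole set of $q+1$ tangent lines of $P^\perp$ lies in $\cL_0$ (when $\alpha$ is a square) or in $\cL_1$ (when $\alpha$ is not), which is exactly the four cases of the lemma. The main obstacle I expect is the bookkeeping of the discriminant comparison and the verification that $Q_{0}$ really takes only square values on tangent lines to the standard conic.
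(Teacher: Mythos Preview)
Your argument is correct. The paper, however, takes a much shorter route: it observes that a tangent line $r$ lies in $P^\perp$ if and only if $P\in r^\perp$, so the tangent lines contained in $P^\perp$ are exactly the polars of the $q+1$ tangent lines through $P$; since the latter all lie in $\cL_0$ (resp.\ $\cL_1$) when $P\in O_s$ (resp.\ $O_n$), it suffices to determine whether the involution $t\mapsto t^\perp$ on tangent lines preserves or swaps $\cL_0$ and $\cL_1$. By transitivity of $G$ on $\cL_0$ this reduces to a single explicit check: the polar of $t=U_1U_3\in\cL_0$ is $U_2U_3$, on which $Q$ takes the value $-\omega b^2$, a square exactly when $q\equiv -1\pmod 4$. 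Your approach avoids the group action entirely and instead computes the discriminant of $Q|_{P^\perp}$ directly; this is more self-contained and would transfer to settings without a convenient transitive group, at the cost of the bookkeeping you anticipated. The paper's proof trades that bookkeeping for the duality observation plus one line of coordinates.
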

\begin{proof}
It is enough to show that if $t \in \cL_0$, then $t^\perp$ belongs either to $\cL_0$ or to $\cL_1$, according as $q \equiv -1 \pmod 4$ or $q \equiv 1 \pmod 4$. Since $G$ is transitive on elements of $\cL_0$, let $t$ be the line joining $U_3$ with $U_1 = (1,0,0,0)$. Then $t^\perp$ is the line joining $U_3$ with $U_2 = (0,1,0,0)$, which belongs to $\cL_0$ if and only if $-1$ is not a square in $\GF(q)$, i.e., $q \equiv -1 \pmod 4$. 
\end{proof}

	\begin{lemma}\label{tang}
Every line of $\PG(3,q)$ not contained in $\pi$ is tangent to exactly one elliptic quadric of $\cal P$.
	\end{lemma}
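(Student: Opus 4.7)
My plan is to reduce tangency to the vanishing of a single discriminant in $\lambda$ and then exploit the pencil structure to see that this discriminant is affine of degree exactly one in $\lambda$, hence has a unique zero. Pick two distinct points $P,R$ spanning $\ell$ and parameterize $\ell=\{[sP+tR]:[s:t]\in\PG(1,q)\}$. Since $Q_\lambda(X)=Q_0(X)+\lambda X_4^2$, the associated polar form satisfies $B_\lambda(X,Y)=B_0(X,Y)+\lambda X_4Y_4$, so the restriction of $Q_\lambda$ to $\ell$ is the binary quadratic form
\[
Q_\lambda(sP+tR)=(Q_0(P)+\lambda P_4^2)s^2+2(B_0(P,R)+\lambda P_4R_4)st+(Q_0(R)+\lambda R_4^2)t^2.
\]
Since each $\cE_\lambda$ is non-degenerate elliptic it contains no line, so this form is never identically zero on $\ell$, and $\ell$ is tangent to $\cE_\lambda$ if and only if its discriminant vanishes.

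The main computation is the expansion
\[
\mathrm{disc}(\lambda)/4=(B_0(P,R)+\lambda P_4R_4)^2-(Q_0(P)+\lambda P_4^2)(Q_0(R)+\lambda R_4^2).
\]
A direct check shows the $\lambda^2$-contribution cancels, leaving the affine expression
\[
\mathrm{disc}(\lambda)/4=[B_0(P,R)^2-Q_0(P)Q_0(R)]-\lambda\,Q_0(S),\qquad S:=P_4R-R_4P.
\]
Geometrically, $S$ lies on $\ell$, has vanishing fourth coordinate, and is nonzero because $\ell\not\subset\pi$ forces $(P_4,R_4)\neq(0,0)$; hence $[S]$ is the unique intersection $\ell\cap\pi$.

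To conclude I would invoke the base-locus property of the pencil $\cP$: since $\cE\cap\pi=\{U_3\}$, the coefficient $Q_0(S)$ is nonzero precisely when $\ell$ meets $\pi$ outside the base point $U_3$. In that case $\mathrm{disc}(\lambda)/4$ is a genuine polynomial of degree one in $\lambda$ with the unique root
\[
\lambda^{*}=\frac{B_0(P,R)^2-Q_0(P)Q_0(R)}{Q_0(S)}\in\GF(q),
\]
producing exactly one tangent elliptic quadric $\cE_{\lambda^{*}}$ in $\cP$. The main obstacle is the algebraic cancellation of the $\lambda^2$-term and the pleasant identification of the linear coefficient with $-Q_0(\ell\cap\pi)$; once these are in hand, the assertion reduces entirely to the base-locus structure $\cE\cap\pi=\{U_3\}$.
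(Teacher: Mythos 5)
Your route is genuinely different from the paper's. The paper argues synthetically: it fixes $P\in\cE_{\lambda}$, cuts the whole pencil $\cP$ with the tangent plane $P^{\perp_{\lambda}}$ to obtain a pencil of conics containing the double line $r=\pi\cap P^{\perp_{\lambda}}$, notes that $r$ is the polar line of $P$ with respect to each non--degenerate conic of that planar pencil, deduces that $P$ is an interior point of each such conic, and concludes that a tangent line to $\cE_{\lambda}$ at $P$ is secant or external to every other $\cE_{\lambda'}$; this gives ``at most one'' synthetically, with ``exactly one'' left to an implicit count. Your discriminant computation replaces all of this with one expansion, and it buys more: the cancellation of the $\lambda^{2}$--term, the identification of the linear coefficient with $-Q_{0}(\ell\cap\pi)$, and the explicit value $\lambda^{*}$ of the tangent parameter. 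That part of your argument is correct.

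There is, however, a gap, and it sits exactly where the statement is delicate: you restrict to ``that case'' $Q_{0}(S)\neq 0$, i.e.\ $\ell\cap\pi\neq U_{3}$, and never treat the $q^{2}$ lines through $U_{3}$ not contained in $\pi$. For those lines your own formula shows the conclusion fails rather than holds: taking $P=U_{3}$ one has $Q_{0}(P)=0$ and $B_{0}(U_{3},R)=R_{4}/2\neq 0$, so $\mathrm{disc}(\lambda)/4=R_{4}^{2}/4$ is a nonzero square independent of $\lambda$, and such a line is secant to every $\cE_{\lambda}$ and tangent to none. (Directly: a line through $U_{3}$ not lying in $U_{3}^{\perp_{\lambda}}=\pi$ cannot be tangent to $\cE_{\lambda}$.) So the lemma really requires the additional hypothesis $U_{3}\notin\ell$ --- a restriction that the later applications effectively respect, since the proof of Lemma~\ref{lemma4} handles lines through $U_{3}$ separately and the classes $\cA$, $\cB$ meet $\pi$ in $\pi_{0}\cup\pi_{1}$. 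To make your write--up complete you must either add that hypothesis or state the exceptional behaviour at $U_{3}$ explicitly; as written, the phrase ``in that case'' silently discards a family of lines for which the assertion you set out to prove is false, and a reader checking only your final sentence would not notice.
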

	\begin{proof}
Let $\lambda\in\GF(q)$ and let $P$ be a point of the elliptic quadric $\cE_\lambda$ of $\cal P$. Let $T_P$ be the pencil of lines through $P$ in $P^{\perp_\lambda}$. In order to prove the result it is enough to show that a line $\ell$ of $T_P$ is either secant or external to a non--degenerate quadric of $\cal P$ distinct from $\cE_\lambda$. The plane $\pi$ meets $P^{\perp_\lambda}$ in a line $r$ and $\cE_{\lambda'}$, $\lambda'\ne\lambda$, in a non--degenerate conic ${\cal C}_{\lambda '}$, $\lambda'\in\GF(q)\setminus\{\lambda\}$. Then $P$, $r$, ${\cal C}_{\lambda'}$, $\lambda'\in\GF(q)\setminus\{\lambda\}$, form a pencil of quadrics of $\pi$.  From \cite[Table 7.7]{H1}, $r$ is the polar line of $P$ with respect to ${\cal C}_{\lambda'}$. Hence, $P$  is an interior point with respect to ${\cal C}_{\lambda'}$ and the result follows.
	\end{proof}

\begin{remark}\label{square}
{\rm Note that $\cE_{\lambda} \subseteq \cO_s$ if and only if $- \lambda$ is a non--zero square in $\GF(q)$.}
\end{remark}

\begin{remark}\label{oss}
{\rm Let $Q_{\lambda}$ be the quadratic form associated to $\cE_{\lambda}$ (then $Q = Q_0$). For a point of $\pi$ the evaluation of $Q_{\lambda}$ is the same for all $\lambda \in \GF(q)$.} 
\end{remark}

Let $\pi_0 = O_s \cap \pi$ and $\pi_1 = O_n \cap \pi$. Then $|\pi_0| = |\pi_1| = q(q+1)/2$. 
We need the following result.

\begin{lemma}\label{lemma3}
Let $R$ be a point of $\cE_{\lambda} \setminus \{U_3\}$, $\lambda \ne 0$, let $\ell$ be a line meeting $\cE_{\lambda}$ exactly in $R$ and let $P = \pi \cap \ell$. If $P \in \pi_0$, then 
$$
\vert \ell \cap \cE \vert=
\begin{cases} 
0 & \mbox{ if } \quad \lambda \mbox{ is a non--square in } \GF(q)\\
2 & \mbox{ if } \quad \lambda \mbox{ is a square in } \GF(q) 
\end{cases} .
$$   
If $P \in \pi_1$, then 
$$
\vert \ell \cap \cE \vert=
\begin{cases} 
2 & \mbox{ if } \quad \lambda \mbox{ is a non--square in } \GF(q)\\
0 & \mbox{ if } \quad \lambda \mbox{ is a square in } \GF(q) 
\end{cases} .
$$   
\end{lemma}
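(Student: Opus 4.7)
The plan is to parametrise $\ell$ and directly solve the quadratic equation cutting out $\ell\cap\cE$, exploiting the simple relation between $Q$ and $Q_\lambda$ provided by the pencil.

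First I would establish the basic positions. Since $\omega$ is a non-square, the intersection $\cE_\lambda\cap\pi$ is cut out by $X_1^2-\omega X_2^2=0$, hence equals $\{U_3\}$; because $R\in\cE_\lambda\setminus\{U_3\}$, we get $R\notin\pi$ and in particular $R_4\ne 0$. Also $P\ne U_3$, for otherwise $U_3\in\ell\cap\cE_\lambda$ would contradict the assumption that $\ell$ meets $\cE_\lambda$ only at $R$; in particular $P\ne R$ and the $q+1$ points of $\ell$ can be written as $\alpha R+\beta P$ with $(\alpha:\beta)\in\PG(1,q)$.

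Next I would compute $Q(\alpha R+\beta P)=\alpha^2Q(R)+\alpha\beta B(R,P)+\beta^2Q(P)$, where $B$ is the bilinear form of $Q$. Three inputs collapse this expression. Since $P_4=0$, Remark \ref{oss} gives $Q(P)=Q_\lambda(P)$, and an inspection of the two quadratic forms shows that their bilinear forms take the same value on the pair $(R,P)$; tangency of $\ell$ to $\cE_\lambda$ at $R$ then yields $B(R,P)=B_\lambda(R,P)=0$. Finally, because $Q_\lambda=Q+\lambda X_4^2$ and $R\in\cE_\lambda$, we have $Q(R)=-\lambda R_4^2$. Substituting, $Q(\alpha R+\beta P)=-\alpha^2\lambda R_4^2+\beta^2 Q(P)$.

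It remains to count the projective solutions. The case $\beta=0$ forces $\alpha=0$ (as $\lambda\ne0$ and $R_4\ne0$), so every intersection point comes from $\beta\ne0$; normalising $\beta=1$, the equation reduces to $\alpha^2=Q(P)/(\lambda R_4^2)$. Since $R_4^2$ is a non-zero square, this has two solutions when $\lambda Q(P)$ is a non-zero square and none when $\lambda Q(P)$ is a non-square. Recalling that $P\in\pi_0$ (resp.\ $P\in\pi_1$) means $Q(P)$ is a non-zero square (resp.\ a non-square), the four cases of the lemma follow by the elementary rule for products of squares and non-squares in $\GF(q)$.

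No step looks truly delicate; the main thing to get right is the bookkeeping with the two quadratic forms $Q$ and $Q_\lambda$ and the observation that their bilinear pairing $B(R,P)$ coincides once $P\in\pi$, which is exactly what allows tangency with respect to $\cE_\lambda$ to be transferred into a vanishing linear coefficient for $Q$.
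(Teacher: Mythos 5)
Your proof is correct. In substance it is the same argument as the paper's: parametrise the tangent line and count the solutions of the quadratic cutting out $\ell\cap\cE$. The difference is in how the quadratic is obtained. The paper first normalises $R=(0,0,-\lambda,1)$ by invoking a subgroup of $K$ of order $q^2$ acting transitively on $\cE_{\lambda}\setminus\{U_3\}$ (a group that is only exhibited explicitly later, in the proof that $\cL'''$ admits $\Gamma$), writes the affine points of $\ell$ as $(\mu x,\mu y,-\lambda,1)$ and arrives at the condition $\mu^{2}(x^{2}-\omega y^{2})=\lambda$. You avoid the normalisation altogether: the polarization identity, the fact that the bilinear forms of $Q$ and $Q_{\lambda}$ agree on pairs whose second entry lies in $\pi$ (so that tangency to $\cE_{\lambda}$ kills the cross term for $Q$ as well), and the pencil relation $Q=Q_{\lambda}-\lambda X_4^{2}$ reduce everything to $\alpha^{2}=Q(P)/(\lambda R_4^{2})$, which is the same quadratic up to taking reciprocals and hence has the same solvability. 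Your preliminary checks ($R\notin\pi$ because $\cE_{\lambda}\cap\pi=\{U_3\}$, and $P\neq U_3$) are exactly what is needed to make the parametrisation legitimate, and the final square/non-square bookkeeping is handled correctly in all four cases. Your version is marginally more self-contained, since it does not presuppose the transitivity claim; the paper's is marginally more concrete.
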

\begin{proof}
Since there exists a subgroup of $K$ of order $q^2$ which permutes in a single orbit the $q^2$ points of an elliptic quadric of $\cP$, w.l.o.g., we can choose the point $R$ as the point $(0,0,-\lambda,1) \in \cE_{\lambda}$. Then $\ell$ is contained in $R^{\perp_{\lambda}}$. Assume that $P \in \pi_0$. Straightforward calculations show that $P$ is a point having coordinates $(x,y,0,0)$, where $x^2 - \omega y^2$ is a non--zero square in $\GF(q)$ and the line $\ell$, apart from $P$, contains the points $(\mu x, \mu y, -\lambda, 1)$, $\mu \in \GF(q)$. Note that $(\mu x, \mu y, -\lambda, 1) \in \cE$ if and only if $\lambda = \mu^2 (x^2 - \omega y^2)$, that is if and only if $\lambda$ is a square in $\GF(q)$. Analogously, if $P \in \pi_1$.  
\end{proof}

Let $\lambda_1 \ne 0$ be a fixed square in $\GF(q)$ and let $\lambda_2$ be a fixed non--square in $\GF(q)$. Consider the following sets of lines:
$$
t_0 = \{r \in \cL_0 \; : \; | r \cap \pi_0 | = q\}, \; t_1 = \{r \in \cL_1 \; : \; | r \cap \pi_1 | = q\}, 
$$
$$
\cT_{10} = \{r \in \cL_2 \; : \; | r \cap \cE_{\lambda_1}| = 1, | r \cap \pi_0 | = 1\}, 
$$
$$
\cT_{11} = \{r \in \cL_3 \; : \; | r \cap \cE_{\lambda_1}| = 1, | r \cap \pi_1 | = 1\},   
$$ 
$$
\cT_{20} = \{r \in \cL_3 \; : \; | r \cap \cE_{\lambda_2}| = 1, | r \cap \pi_0 | = 1 \}, 
$$
$$
\cT_{21} = \{r \in \cL_2 \; : \; | r \cap \cE_{\lambda_2}| = 1, | r \cap \pi_1 | = 1\} .
$$ 
Then $|t_0| = |t_1| = (q+1)/2$ and $|\cT_{10}| = |\cT_{11}| = |\cT_{20}| = |\cT_{21}| = q^2(q+1)/2$. 

Let $\cA = \cT_{11} \cup \cT_{20}$ and $\cB = \cT_{10} \cup \cT_{21}$. From Lemma \ref{lemma3}, we have that $\cA$ is a set consisting of $q^2(q+1)$ external lines to $\cE$ and $\cB$ is a set consisting of $q^2(q+1)$ secant lines to $\cE$.

For a line $\ell$ of $\PG(3,q)$, we define the following line--sets: 
$$
\cA_{\ell} = \{ r \in \cA \; : \; | r \cap \ell | = 1\}, \; \cB_{\ell} = \{ r \in \cB \; : \; | r \cap \ell | = 1\}.
$$

\begin{remark}\label{oss1}
{\rm Taking into account Remark \ref{oss}, by construction, we have the following:
\begin{itemize}
\item the lines in $\cT_{11} \cup t_1$ are all the $(q+1)(q^2+1)/2$ tangent lines to $\cE_{\lambda_1}$ having $q$ non--square points with respect to $Q_{\lambda_1}$; 
\item the lines in $\cT_{10} \cup t_0$ are all the $(q+1)(q^2+1)/2$ tangent lines to $\cE_{\lambda_1}$ having $q$ square points with respect to $Q_{\lambda_1}$;
\item the lines in $\cT_{20} \cup t_0$ are all the $(q+1)(q^2+1)/2$ tangent lines to $\cE_{\lambda_2}$ having $q$ square points with respect to $Q_{\lambda_2}$;
\item the lines in $\cT_{21} \cup t_1$ are all the $(q+1)(q^2+1)/2$ tangent lines to $\cE_{\lambda_2}$ having $q$ non--square points with respect to $Q_{\lambda_2}$.  
\end{itemize}}
\end{remark}

\begin{lemma}\label{lemma4}
Let $\ell$ be a line of $\PG(3,q)$ such that $\ell \notin \cA \cup \cB$, then $|\cA_{\ell}| = |\cB_{\ell}|$.
\end{lemma}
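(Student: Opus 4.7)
The plan is to recast $\cA$ and $\cB$ in terms of the full tangent-line systems of $\cE_{\lambda_1}$ and $\cE_{\lambda_2}$ and reduce everything to the incidence counts already proved for an elliptic quadric of $\PG(3,q)$.

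For $i\in\{1,2\}$, let $\cT_i^+$ (resp.\ $\cT_i^-$) denote the set of all tangent lines to $\cE_{\lambda_i}$ that contain $q$ square (resp.\ non--square) points with respect to $Q_{\lambda_i}$. By Remark \ref{oss1},
$$
\cT_1^+=\cT_{10}\cup t_0,\quad \cT_1^-=\cT_{11}\cup t_1,\quad \cT_2^+=\cT_{20}\cup t_0,\quad \cT_2^-=\cT_{21}\cup t_1.
$$
Combining Lemma \ref{tang} with Remark \ref{oss}, any line belonging to two of the sets $\cT_i^\pm$ for different $i$ must lie in $\pi$ and pass through $U_3$; this gives $\cT_1^-\cap \cT_2^+=\cT_1^+\cap \cT_2^-=\emptyset$, while $t_0\subset \cT_1^+\cap \cT_2^+$ and $t_1\subset \cT_1^-\cap \cT_2^-$. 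Consequently $\cA=(\cT_1^-\setminus t_1)\sqcup (\cT_2^+\setminus t_0)$ and $\cB=(\cT_1^+\setminus t_0)\sqcup (\cT_2^-\setminus t_1)$, and for any line $\ell\notin \cA\cup \cB$ the contributions $|(t_0)_\ell|$ and $|(t_1)_\ell|$ cancel in the difference, yielding
$$
|\cA_\ell|-|\cB_\ell|=\bigl(|(\cT_1^-)_\ell|-|(\cT_1^+)_\ell|\bigr)+\bigl(|(\cT_2^+)_\ell|-|(\cT_2^-)_\ell|\bigr).
$$

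Next I would split cases according to Lemma \ref{tang}. If $\ell$ is tangent to neither $\cE_{\lambda_1}$ nor $\cE_{\lambda_2}$, the analog of Lemma \ref{lemma1} applied to each of them (they are elliptic quadrics projectively equivalent to $\cE$) gives $|(\cT_i^+)_\ell|=|(\cT_i^-)_\ell|=(q+1)^2/2$ for $i=1,2$, so both differences vanish. If $\ell$ is tangent to exactly one of them, say $\cE_{\lambda_1}$, then by Lemma \ref{tang} $\ell\not\subset \pi$ and in particular $\ell\notin t_0\cup t_1$; whichever of $\cT_1^\pm$ contains $\ell$ would then place $\ell$ into $\cT_{10}\subset \cB$ or $\cT_{11}\subset \cA$, contradicting the hypothesis, so this case is vacuous. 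Finally, if $\ell$ is tangent to both, then $\ell\subset \pi$ and the identity $\pi\cap \cE_{\lambda_i}=\{U_3\}$ forces $U_3\in \ell$, so $\ell\in t_0\cup t_1$; since $t_0\subset \cT_1^+\cap \cT_2^+$ and $t_1\subset \cT_1^-\cap \cT_2^-$, the line $\ell$ lies in the same $\pm$--class for $\cE_{\lambda_1}$ as for $\cE_{\lambda_2}$. The analog of Lemma \ref{lemma2} then gives $|(\cT_i^+)_\ell|,|(\cT_i^-)_\ell|\in\{(q+1)/2,\,q^2+(q-1)/2\}$, arranged so that the two parenthesized differences above equal $\pm (q^2-1)$ with opposite signs and cancel.

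I expect the principal technical hurdle to be confirming that Lemmas \ref{lemma1} and \ref{lemma2} apply verbatim to $\cE_{\lambda_1}$ and $\cE_{\lambda_2}$ in place of $\cE$. This is justified by projective equivalence: each $\cE_{\lambda_i}$ is an elliptic quadric of $\PG(3,q)$, and the incidence numbers between tangent lines of a fixed $\pm$--type and a given line (of prescribed incidence type with respect to the quadric) are projective invariants, hence coincide with the numbers computed for $\cE$ in Section 2.
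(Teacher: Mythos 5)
Your proof is correct and follows essentially the same route as the paper's: both reduce $\cA$ and $\cB$ to the tangent--line classes of $\cE_{\lambda_1}$ and $\cE_{\lambda_2}$ via Remark \ref{oss1}, invoke the analogues of Lemmas \ref{lemma1} and \ref{lemma2} for those quadrics, and use Lemma \ref{tang} to govern the case split. Your device of tracking only the difference $|\cA_\ell|-|\cB_\ell|$, so that the $t_0$ and $t_1$ corrections cancel identically, is a mild streamlining that collapses the paper's sub--cases on $\ell\cap\pi_0$ and $\ell\cap\pi_1$ into a single computation.
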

\begin{proof}
From Lemma \ref{tang}, the line $\ell$ either is not tangent neither to $\cE_{\lambda_1}$ nor to $\cE_{\lambda_2}$, or $U_3 \in \ell \subset \pi$ and $\ell$ is tangent to both $\cE_{\lambda_1}$ and $\cE_{\lambda_2}$. Observe that if $\ell$ is not tangent neither to $\cE_{\lambda_1}$ nor to $\cE_{\lambda_2}$, then, from Remark \ref{oss1} and Lemma \ref{lemma1}, each of the following line--sets: $\cT_{11} \cup t_1$, $\cT_{20} \cup t_0$, $\cT_{10} \cup t_0$, $\cT_{21} \cup t_1$, contains $(q+1)^2/2$ lines meeting $\ell$ in a point. We consider several cases.

\bigskip
\fbox{Case 1): $|\ell \cap \cE_{\lambda_1}| = |\ell \cap \cE_{\lambda_2}| = 0$}

\begin{itemize}
\item[{\bf a)}] Assume first that $|\ell \cap \pi_0| = 1$. In this case, there is exactly one line of $t_0$ meeting $\ell$ in a point and there are no lines of $t_1$ meeting $\ell$ in a point. It follows that $|\cA_\ell| = (q+1)^2/2+(q+1)^2/2-1$. Analogously, there are $(q+1)^2/2$ lines in $\cT_{10} \cup t_0$ meeting $\ell$ and $(q+1)^2/2$ lines in $\cT_{21} \cup t_1$ meeting $\ell$. Hence, $|\cB_\ell| = (q+1)^2/2+(q+1)^2/2-1$.  
\item[{\bf b)}] If $|\ell \cap \pi_1| = 1$, then there is exactly one line of $t_1$ meeting $\ell$ in a point and there are no lines of $t_0$ meeting $\ell$ in a point. Hence, we get again $|\cA_\ell| = |\cB_{\ell}| = q(q+2)$.
\item[{\bf c)}] If $|\ell \cap \pi_0| = |\ell \cap \pi_1| = (q+1)/2$, then there are $(q+1)/2$ lines of $t_1$ meeting $\ell$ in a point and $(q+1)/2$ lines of $t_0$ meeting $\ell$ in a point. It follows that $|\cA_\ell| = |\cB_{\ell}| = (q+1)^2/2+(q+1)/2-(q+1)^2/2-(q+1)/2 = q(q+1)$.
\end{itemize}

\fbox{Case 2): $|\ell \cap \cE_{\lambda_1}| = 2, |\ell \cap \cE_{\lambda_2}| = 0$ or $|\ell \cap \cE_{\lambda_1}| = 0, |\ell \cap \cE_{\lambda_2}| = 2$ }
\bigskip

Repeating the same argument as in Case 1), {\bf a)}, or Case 1), {\bf b)}, according as $|\ell \cap \pi_0| = 1$ or $|\ell \cap \pi_1| = 1$, we obtain $|\cA_{\ell}| = |\cB_{\ell}| = q(q+2)$.

\bigskip
\fbox{Case 3): $|\ell \cap \cE_{\lambda_1}| = 2, |\ell \cap \cE_{\lambda_2}| = 2$}
\bigskip

Repeating the same argument as in Case 1), {\bf a)}, or Case 1), {\bf b)}, according as $|\ell \cap \pi_0| = 1$ or $|\ell \cap \pi_1| = 1$, we obtain $|\cA_{\ell}| = |\cB_{\ell}| = q(q+2)$. If $U_3 \in \ell$, then there are $(q+1)/2$ lines of $t_1$ meeting $\ell$ in a point and $(q+1)/2$ lines of $t_0$ meeting $\ell$ in a point. It follows that $|\cA_\ell| = |\cB_{\ell}| = q(q+1)$.

\bigskip
\fbox{Case 4): $|\ell \cap \cE_{\lambda_1}| = |\ell \cap \cE_{\lambda_2}| = 1$}
\bigskip

There are two possibilities: either $\ell \in t_0$ or $\ell \in t_1$.
\begin{itemize}
\item[{\bf a)}] $\ell \in t_0$. In this case, from Remark \ref{oss1} and Lemma \ref{lemma2}, each of the following line--sets: $\cT_{10} \cup t_0$, $\cT_{20} \cup t_0$, contains $q^2+(q-1)/2$ lines meeting $\ell$ in a point. Analogously,  each of the following line--sets: $\cT_{11} \cup t_1$, $\cT_{21} \cup t_1$, has $(q+1)/2$ elements meeting $\ell$ in a point. On the other hand, $t_0$ contains $(q-1)/2$ lines intersecting $\ell$ in a point and $t_1$ contains $(q+1)/2$ lines intersecting $\ell$ in a point. Hence, $|\cA_{\ell}| = |\cB_\ell| = q^2$.  
\item[{\bf b)}] $\ell \in t_1$. As in the previous case, we get again $|\cA_\ell| = |\cB_{\ell}| = q^2$.
\end{itemize}
The proof is now complete.
\end{proof}

\begin{lemma}\label{lemma5}
Let $\ell$ be a line of $\PG(3,q)$.
\begin{itemize} 
\item If $\ell \in \cA$, then $|\cA_{\ell}| = \frac{3q^2+3q-2}{2}$ and $|\cB_{\ell}| = \frac{q^2+3q}{2}$;
\item if $\ell \in \cB$, then $|\cA_{\ell}| = \frac{q^2+3q}{2}$ and $|\cB_{\ell}| = \frac{3q^2+3q-2}{2}$.
\end{itemize}
\end{lemma}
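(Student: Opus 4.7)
The plan is to adapt the counting scheme used in the proof of Lemma \ref{lemma4}; the only reason $\ell \in \cA \cup \cB$ was excluded there is that such an $\ell$ is tangent to exactly one of $\cE_{\lambda_1}$ or $\cE_{\lambda_2}$ at a point distinct from $U_3$, so the analogue of Lemma \ref{lemma1} (valid for lines secant or external to the quadric) must be replaced by the analogue of Lemma \ref{lemma2} (valid for tangent lines) for precisely that quadric. By the evident symmetry between $(\cT_{11},\lambda_1,\pi_1)$ and $(\cT_{20},\lambda_2,\pi_0)$ inside $\cA$, and likewise between $(\cT_{10},\lambda_1,\pi_0)$ and $(\cT_{21},\lambda_2,\pi_1)$ inside $\cB$, it is enough to carry out the representative case $\ell \in \cT_{11}$; the three remaining cases reduce to formally identical arguments.

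Assuming $\ell \in \cT_{11}$, I first collect the basic incidence data. By definition and by Lemma \ref{tang}, $\ell$ is external to $\cE$, tangent to $\cE_{\lambda_1}$ at some point $R \neq U_3$, and either secant or external to $\cE_{\lambda_2}$. In particular $U_3 \notin \ell$ and $\ell \not\subset \pi$, so $\ell \cap \pi$ is a single point, which by the definition of $\cT_{11}$ lies in $\pi_1$. By Remark \ref{oss1}, $\ell$ belongs to the orbit $\cT_{11}\cup t_1$ of tangent lines to $\cE_{\lambda_1}$ having $q$ non--square points with respect to $Q_{\lambda_1}$. Since all elliptic quadrics of $\PG(3,q)$ are projectively equivalent, the analogues of Lemmas \ref{lemma1} and \ref{lemma2} are available for $\cE_{\lambda_1}$ and $\cE_{\lambda_2}$. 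Applying the tangent--line analogue of Lemma \ref{lemma2} to $\cE_{\lambda_1}$ then yields $q^2+(q-1)/2$ lines of $\cT_{11}\cup t_1$ and $(q+1)/2$ lines of $\cT_{10}\cup t_0$ meeting $\ell$ in a point, while applying the secant/external analogue of Lemma \ref{lemma1} to $\cE_{\lambda_2}$ yields $(q+1)^2/2$ lines in each of $\cT_{20}\cup t_0$ and $\cT_{21}\cup t_1$ meeting $\ell$ in a point.

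The final step is to peel off the contributions of $t_0$ and $t_1$. Since those lines all pass through $U_3 \in \pi$ and $U_3 \notin \ell$, the only line of $t_0 \cup t_1$ that meets $\ell$ is the unique line joining $U_3$ to $\ell \cap \pi$; because $\ell \cap \pi \in \pi_1$ this line lies in $t_1$. Hence exactly one line of $t_1$ and no line of $t_0$ is lost. Subtracting and then summing $|\cT_{11} \cap \cA_\ell| + |\cT_{20} \cap \cA_\ell|$ and $|\cT_{10} \cap \cB_\ell| + |\cT_{21} \cap \cB_\ell|$ gives the asserted values $(3q^2+3q-2)/2$ and $(q^2+3q)/2$. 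The only genuine obstacle is to recognize that the role played by Lemma \ref{lemma1} in the proof of Lemma \ref{lemma4} must now be taken over by Lemma \ref{lemma2} for the one quadric of the pencil to which $\ell$ is tangent; after that the argument reduces to arithmetic and a single incidence check in the plane $\pi$.
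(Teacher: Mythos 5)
Your proposal is correct and follows essentially the same route as the paper's own proof: apply the tangent-line counts of Lemma \ref{lemma2} to the one quadric ($\cE_{\lambda_1}$ for $\ell \in \cT_{11}$) to which $\ell$ is tangent, apply Lemma \ref{lemma1} to the other, and then correct by the single line of $t_1$ (and no line of $t_0$) through $U_3$ and $\ell \cap \pi$. The identification of the orbits via Remark \ref{oss1}, the reduction to the representative case $\ell \in \cT_{11}$, and the final arithmetic all match the paper.
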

\begin{proof}
Assume first that $\ell \in \cA$ and in particular that $\ell \in \cT_{11}$. From Remark \ref{oss1} and Lemma \ref{lemma2}, there are $q^2+(q-1)/2$ lines of $\cT_{11} \cup t_1$ meeting $\ell$ in a point, whereas, there are $(q+1)/2$ lines of $\cT_{10}$ meeting $\ell$ in a point. Also, from Remark \ref{oss1} and Lemma \ref{lemma1}, each of the following line--sets: $\cT_{20} \cup t_0$, $\cT_{21} \cup t_1$ contains $(q+1)^2/2$ lines meeting $\ell$ in a point. Since there is exactly one line of $t_1$ meeting $\ell$ in a point and there are no lines of $t_0$ meeting $\ell$ in a point, it follows that $|\cA_{\ell}| = q^2+ (q-1)/2 - 1 + (q+1)^2/2 = (3q^2+3q-2)/2$ and $|\cB_{\ell}| = (q+1)^2/2 + (q+1)/2 -1 = (q^2+3q)/2$. Similarly, if $\ell \in \cT_{20}$.

If $\ell \in \cB$, repeating the previous arguments, we obtain the desired result.  
\end{proof}

We are ready to prove the following result.

	\begin{theorem}\label{cl}
Let $\cL$ be a Cameron--Liebler line class with parameter $(q^2+1)/2$ such that $\cA \subset \cL$ and $|\cB \cap \cL| = 0$. Then the set $\bar{\cL} = (\cL \setminus \cA) \cup \cB$ is a Cameron--Liebler line class with parameter $(q^2+1)/2$.
	\end{theorem}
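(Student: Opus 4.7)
The plan is to verify the Cameron--Liebler property for $\bar{\cL}$ line by line, using the tight--set reformulation recalled in Section \ref{sect:intro}. Under the Klein correspondence, a set $\cM$ of lines of $\PG(3,q)$ is a Cameron--Liebler line class with parameter $x$ if and only if, for every line $\ell$ of $\PG(3,q)$,
$$
\mu_\cM(\ell) \;:=\; \bigl|\{\,r\in\cM \,:\, r\cap\ell\neq\emptyset\,\}\bigr| \;=\; x(q+1)+q^{2}\,\chi_\cM(\ell),
$$
where $\chi_\cM$ denotes the characteristic function of $\cM$ and $\ell$ itself is counted in $\mu_\cM(\ell)$ precisely when $\ell\in\cM$ (in analogy with $P\in P^\perp$ on $\mathcal{Q}^+(5,q)$).

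Since $\cA\subseteq\cL$, $\cB\cap\cL=\emptyset$, and $\cA\cap\cB=\emptyset$ ($\cA$ consists of external lines and $\cB$ of secant lines to $\cE$ by Lemma \ref{lemma3}), we have $\chi_{\bar{\cL}}=\chi_\cL-\chi_\cA+\chi_\cB$, so that
$$
\mu_{\bar{\cL}}(\ell)=\mu_\cL(\ell)-\mu_\cA(\ell)+\mu_\cB(\ell).
$$
The line--sets $\cA_\ell$ and $\cB_\ell$ defined earlier only count lines meeting $\ell$ in exactly one point (so $r=\ell$ is excluded, since then $|r\cap\ell|=q+1$), giving $\mu_\cA(\ell)=|\cA_\ell|+\chi_\cA(\ell)$ and $\mu_\cB(\ell)=|\cB_\ell|+\chi_\cB(\ell)$. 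Plugging in the Cameron--Liebler identity assumed for $\cL$, the condition $\mu_{\bar{\cL}}(\ell)=\frac{q^{2}+1}{2}(q+1)+q^{2}\chi_{\bar{\cL}}(\ell)$ that we wish to establish collapses to the single identity
$$
|\cA_\ell|-|\cB_\ell| \;=\; (q^{2}-1)\bigl(\chi_\cA(\ell)-\chi_\cB(\ell)\bigr)
$$
for every line $\ell$ of $\PG(3,q)$.

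This final identity is now verified by cases. If $\ell\notin\cA\cup\cB$, the right--hand side is $0$ and the equality is exactly Lemma \ref{lemma4}. If $\ell\in\cA$, Lemma \ref{lemma5} yields
$|\cA_\ell|-|\cB_\ell|=\frac{3q^{2}+3q-2}{2}-\frac{q^{2}+3q}{2}=q^{2}-1$,
which matches the right--hand side; the case $\ell\in\cB$ is symmetric and handled by the second bullet of Lemma \ref{lemma5}. Because $\cA$ and $\cB$ are disjoint, these cases are exhaustive and non--overlapping, and the theorem follows.

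The proof offers no serious obstacle: Lemmas \ref{lemma4} and \ref{lemma5} already contain the whole combinatorial content. The only bookkeeping point is the off--by--one arising from the convention that $\cA_\ell$ and $\cB_\ell$ exclude $r=\ell$, while the tight--set count $\mu$ includes $\ell$ whenever it belongs to the class in question; this is exactly what turns the ``expected'' factor $q^{2}$ on the right--hand side of the reduced identity into $q^{2}-1$.
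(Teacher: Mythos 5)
Your proof is correct and follows essentially the same route as the paper's: both verifications reduce entirely to Lemmas \ref{lemma4} and \ref{lemma5} via the count of lines of the class meeting a fixed line $\ell$, with the same $\pm 1$ bookkeeping for the fact that $\ell$ meets itself. Your reduction to the single identity $|\cA_\ell|-|\cB_\ell|=(q^2-1)\bigl(\chi_\cA(\ell)-\chi_\cB(\ell)\bigr)$ is merely a compact repackaging of the paper's four-case computation.
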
 
\begin{proof}
Since $\cL$ is a Cameron--Liebler line class with parameter $(q^2+1)/2$, we have that $|\{r \in \cL \; : \; |r \cap \ell| \ge 1\}|$ equals  $q^2+ (q+1)(q^2+1)/2$ if $\ell \in \cL$, or $(q + 1)(q^2+1)/2$ if $\ell \notin \cL$. 

Let $\ell$ be a line of $\PG(3,q)$. 
\begin{itemize}
\item  If $\ell \in \cL \setminus (\cA \cup \cB)$, then $\ell \in \bar{\cL}$. From Lemma \ref{lemma4}, it follows that $|\{r \in \bar{\cL} \; : \; |r \cap \ell| \ge 1\}|$ equals  $q^2+ (q+1)(q^2+1)/2$. 
\item If $\ell \notin \cL \cup \cA \cup \cB$, then $\ell \notin \bar{\cL}$. From Lemma \ref{lemma4}, it follows that $|\{r \in \bar{\cL} \; : \; |r \cap \ell| \ge 1\}|$ equals $(q + 1)(q^2+1)/2$. 
\item If $\ell \in \cA$, then $\ell \in \cL \setminus \bar{\cL}$. From Lemma \ref{lemma5}, we have that $|\{r \in \bar{\cL} \; : \; |r \cap \ell| \ge 1\}|$ equals $q^2 + (q + 1)(q^2+1)/2 - (3q^2+3q-2)/2 + (q^2+3q)/2 - 1 = (q+1)(q^2+1)/2$.
\item If $\ell \in \cB$, then $\ell \in \bar{\cL} \setminus \cL$. From Lemma \ref{lemma5}, we have that $|\{r \in \bar{\cL} \; : \; |r \cap \ell| \ge 1\}|$ equals $(q + 1)(q^2+1)/2 + (3q^2+3q-2)/2 - (q^2+3q)/2 + 1 = q^2 + (q+1)(q^2+1)/2$. 
\end{itemize}
The proof is now complete.
\end{proof}
	
We consider $\cL$ being $\cL'$ and we denote by $\cL''' = (\cL' \setminus \cA) \cup \cB$. Then, from Theorem \ref{cl}, $\cL'''$ is a Cameron--Liebler line class with parameter $(q^2+1)/2$.

In what follows, we show that $\cL'''$ is left invariant by a group of order $q^2(q+1)$. We shall find it helpful to associate to a projectivity of $\PGL(4,q)$ a matrix of $\GL(4,q)$. We shall consider the points as column vectors, with matrices acting on the left.

\begin{prop}
The Cameron--Liebler line class $\cL'''$ admits a subgroup of $K$ of order $q^2(q+1)$.
\end{prop}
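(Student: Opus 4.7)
The plan is to exhibit the required subgroup as $H=\mathrm{Stab}_K(\cE_{\lambda_1})$, the stabilizer in $K$ of the quadric $\cE_{\lambda_1}$. I claim $H$ in fact fixes every member of the pencil $\cP$, so it automatically satisfies both the order condition and the invariance condition.

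To compute $|H|$, I would first describe the action of $K$ on the pencil $\cP$. Since each element of $K$ fixes $U_3$ and the polar plane $\pi=U_3^{\perp}$, an elementary matrix calculation shows that the induced action on the parameter $\lambda$ of $\cE_\lambda$ has the form $\lambda\mapsto c\lambda$ for some $c\in\GF(q)^{\ast}$ depending on the element; moreover, because $G=\PO^-(4,q)$ is the commutator subgroup of the full stabilizer of $\cE$ in $\PGL(4,q)$, the scalars $c$ that actually arise form exactly the subgroup $(\GF(q)^{\ast})^2$ of squares. This is consistent with, and essentially forced by, the requirement coming from Remark \ref{square} that $K$ preserve the two $G$-orbits $O_s$ and $O_n$. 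Thus the $K$-orbit of $\lambda_1$ has length $(q-1)/2$, and combining this with $|K|=|G|/(q^2+1)=q^2(q^2-1)/2$ gives $|H|=q^2(q+1)$.

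To show $H$ preserves $\cL'''$, note that because $H\subseteq G$, it preserves each $G$-line-orbit $\cL_0,\cL_1,\cL_2,\cL_3$, and in particular $\cL'=\cL_0\cup\cL_3$. Because $H\subseteq K$, it also preserves $U_3$, $\pi$, $\pi_0$ and $\pi_1$. Finally, because $c=1$ on $H$ by construction, $H$ fixes every quadric $\cE_\lambda$ of $\cP$ setwise, and in particular both $\cE_{\lambda_1}$ and $\cE_{\lambda_2}$. Since each of $\cT_{10},\cT_{11},\cT_{20},\cT_{21}$ is defined using only these $H$-invariant objects, each of them is $H$-invariant; hence so are $\cA=\cT_{11}\cup\cT_{20}$ and $\cB=\cT_{10}\cup\cT_{21}$, and therefore $\cL'''=(\cL'\setminus\cA)\cup\cB$ is $H$-invariant.

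The hard part will be the matrix calculation in the first step: isolating the multiplier $c$ in terms of the entries of an element of $K$ (one expects $c=\alpha/a_{44}$, where $\alpha$ is the scalar by which the third coordinate is rescaled and $a_{44}$ is the $(4,4)$-entry), and then checking that the restriction to the commutator subgroup $G$ produces exactly the squares rather than some proper subgroup thereof. The remainder of the argument is essentially formal once $H$ has been identified as the kernel of this multiplier action on the pencil.
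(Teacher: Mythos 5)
Your strategy is genuinely different from the paper's, and in outline it is sound. The paper works in the opposite direction: it writes down two explicit matrix groups, an elementary abelian $\Psi$ of order $q^2$ and a cyclic $\Phi$ of order $q+1$, checks by direct computation that each fixes every quadric of the pencil $\cP$ (and preserves the square/non--square type of the evaluations of each $Q_{\lambda'}$), sets $\Gamma=\Psi\times\Phi$, and then proves invariance of $\cL'''$ via an orbit--length computation on lines. Your identification of the group abstractly as $H=\mathrm{Stab}_K(\cE_{\lambda_1})=\ker(c)$, where $c$ is the multiplier of the induced action of $K$ on the pencil parameter, is conceptually cleaner, and your invariance argument --- each $\cT_{ij}$ is defined purely in terms of the $H$--invariant objects $\cL_2,\cL_3,\cE_{\lambda_1},\cE_{\lambda_2},\pi_0,\pi_1$, hence $\cA$, $\cB$ and $\cL'''=(\cL'\setminus\cA)\cup\cB$ are $H$--invariant --- is tidier than the paper's. (Note, however, that the explicit subgroup $\Psi$ is invoked elsewhere, e.g.\ in the proof of Lemma \ref{lemma3}, so the paper's concrete construction does extra duty.) The shape $\lambda\mapsto c\lambda$ of the action is indeed forced, since the induced projectivity of the parameter line fixes the two distinguished members $\cE_0$ and $\pi$ of $\cP$, and $c(g)$ is always a square because a matrix representative of $g\in G$ multiplies $Q$ by a square and rescales $X_4$ by a scalar.

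The genuine gap is the surjectivity of $c$ onto $(\GF(q)^{*})^2$, which your whole order computation hinges on and which you explicitly defer. The appeal to Remark \ref{square} and the preservation of $O_s$, $O_n$ only yields the containment $\mathrm{im}(c)\subseteq(\GF(q)^{*})^2$; it does not rule out a proper subgroup. If the image were smaller, the $K$--orbit of $\lambda_1$ would be shorter than $(q-1)/2$, $H$ would be strictly larger than $q^2(q+1)$, and you would still owe a subgroup of the exact order $q^2(q+1)$ (the statement, and its later use in distinguishing $\cL'''$, wants that precise order). To close this you must exhibit elements of $K$ realizing every square multiplier: for instance, the projectivities induced by $\diag(1,1,a,a^{-1})$ preserve $Q$, fix $U_3$ and $\pi$, and send $\cE_\lambda$ to $\cE_{a^2\lambda}$, so as $a$ ranges over $\GF(q)^{*}$ one obtains all squares; the remaining point is to correct such an element, when its spinor norm is non--trivial, by an isometry supported on the anisotropic plane $\langle U_1,U_2\rangle$ or by $\diag(-1,-1,1,1)$ so as to land in the commutator subgroup $G=\PO(4,q)$. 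With that verification supplied, your argument is complete; without it, the claimed equality $|H|=q^2(q+1)$ is unproved.
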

\begin{proof}
Let $\Psi$ be the subgroup of $\PGL(4,q)$ whose elements are associated to the following matrices:
$$
\left( \begin{array}{cccc}
1 & 0 & 0 & -x \\
0 & 1 & 0 & -y \\
2x & -2\omega y & 1 & \omega y^2 - x^2 \\
0 & 0 & 0 & 1  
\end{array} \right) ,
$$
where $x, y \in \GF(q)$. Then $\Psi$ is an elementary abelian group of order $q^2$. Easy computations show that if $g \in \Psi$ and $P \in \cE_{\lambda}$, then $P^g \in \cE_{\lambda}$ and $\Psi$ acts transitively on $\cE_{\lambda} \setminus \{U_3\}$, $\lambda \in \GF(q)$; furthermore, the evaluation of $Q_{\lambda'}$, $\lambda' \ne \lambda$, is the same for both $P$ and $P^g$. 
Let $\Phi$ be the subgroup of $\PGL(4,q)$ whose elements are associated to the following matrices:
$$
\left( \begin{array}{cccc}
z & wt & 0 & 0 \\
t & z & 0 & 0 \\
0 & 0 & u & 0 \\
0 & 0 & 0 & u  
\end{array} \right) ,
$$
where $z, t, u \in \GF(q)$ are such that $z^2-wt^2=u^2$. Note that the previous equation holds true if and only if the line of $\PG(2,q)$ joining the points $(0,0,1)$ and $(z,t,0)$ is secant to the conic $\cD: X_1^2-wX_2^2-X_3^2 = 0$. Since $(0,0,1)$ is an interior point with respect to $\cD$, we have that up to a scalar factor there are exactly $q+1$ triple $(z,t,u)$ such that $z^2-wt^2=u^2$. Hence $|\Phi| = q+1$. Easy computations show that if $g \in \Phi$ and $P \in \cE_{\lambda}$, then $P^g \in \cE_{\lambda}$; furthermore, the evaluation of $Q_{\lambda'}$, $\lambda' \ne \lambda$, for the point $P$ is a square if and only if it is a square for $P^g$.
Since both, $\Psi$ and $\Phi$ stabilize $U_3$, we have that the direct product $\Gamma = \Psi \times \Phi$ is a group of order $q^2(q+1)$ fixing the point $U_3$ and the plane $\pi$. Hence $\Gamma$ is a subgroup of $K$. 

The group $\Gamma$ has the orbits $\pi_0$, $\pi_1$ and $\{U_3\}$ on $\pi$. It follows that the stabilizer in $\Gamma$ of a point of $\pi \setminus \{U_3\}$ has order two. Let $t$ be a line of $\PG(3,q)$ not contained in $\pi$. From Lemma \ref{tang}, $t$ is tangent to exactly an elliptic quadric $\cE_{\lambda}$ at the point $R$. The stabilizer of $t$ in $\Gamma$ has to fix $R$ and $t \cap \pi$. Hence it has order at most two. On the other hand $(R U_3)^{\perp_{\lambda}}$ does not depend on $\lambda$ and $\Gamma$ contains the involutory biaxial homology of $G$ fixing pointwise the lines $R U_3$ and $(R U_3)^{\perp_{\lambda}}$. Hence $|t^{\Gamma}|$ equals $q^2(q+1)/2$ and $\Gamma$ fixes $\cL'''$.    
\end{proof}

Let us denote by $O_s^i$ and $O_n^i$, the sets of size $q^2(q^2+1)/2$ corresponding to points of $\PG(3,q)$ such that the evaluation of the quadratic form $Q_{\lambda_i}$, $i = 1,2$, is a non--zero square or a non--square in $\GF(q)$, respectively.

\begin{prop}\label{char1}
The characters of $\cL'''$, with respect to line--sets in planes of $\PG(3,q)$ form a subset of:
$$
\left\{ q^2 + \frac{q+1}{2}, q^2 - \frac{3(q+1)}{2}, \frac{q(q-1)}{2}+3(q+1), \frac{q(q-1)}{2}+2(q+1), \right.
$$
$$
\left. \frac{q(q-1)}{2} +q+1, \frac{q(q-1)}{2}, \frac{q(q-1)}{2}-(q+1), \frac{q(q+1)}{2}-2(q+1) \right\} .
$$ 
\end{prop}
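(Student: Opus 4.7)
The plan is to exploit the identity
\[
|\Pi \cap \cL'''| \;=\; |\Pi \cap \cL'| \;-\; |\Pi \cap \cA| \;+\; |\Pi \cap \cB|,
\]
which is valid because $\cA\subset\cL_3\subset\cL'$ while $\cB\subset\cL_2$ is disjoint from $\cL'$. Since the characters of $\cL'$ on planes are already known to be $q^2+(q+1)/2$, $q(q-1)/2$ and $q(q+1)/2+1$, it suffices to control the correction $|\Pi\cap\cB|-|\Pi\cap\cA|$ for every plane $\Pi$ of $\PG(3,q)$.

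First, I would dispose of the case $\Pi=\pi$. A direct calculation from the defining equations gives $\cE_\lambda\cap\pi=\{U_3\}$ for every $\lambda\in\GF(q)$, so every line of $\cA\cup\cB$ is tangent to some $\cE_{\lambda_i}$ at a point off $\pi$ and therefore meets $\pi$ transversally in exactly one point of $\pi_0\cup\pi_1$. Consequently no line of $\cA\cup\cB$ lies in $\pi$, and $|\pi\cap\cL'''|=|\pi\cap\cL'|=q^2+(q+1)/2$, accounting for the first character on the list.

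For every other plane $\Pi$ set $m:=\Pi\cap\pi$, a line of $\pi$. A line of $\cA\cup\cB$ lies in $\Pi$ precisely when it is one of the $q+1$ tangents to some $\cE_{\lambda_i}$ in $\Pi$ whose intersection with $\pi$ is a point of $m\cap(\pi_0\cup\pi_1)$. The discussion is naturally split along three axes: whether $U_3\in\Pi$ (equivalently $U_3\in m$); whether $\Pi$ is tangent or secant to each of $\cE_{\lambda_1}$, $\cE_{\lambda_2}$ (in either case there are exactly $q+1$ tangents to $\cE_{\lambda_i}$ in $\Pi$); and how the points of $m\setminus\{U_3\}$ distribute between $\pi_0$ and $\pi_1$, which by Remark \ref{oss} is independent of $\lambda$. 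In each configuration, Remark \ref{oss1} combined with Lemma \ref{lemma3} and Remark \ref{square} tells me, for every tangent line $\ell$ to $\cE_{\lambda_i}$ in $\Pi$, which of the four classes $\cT_{10},\cT_{11},\cT_{20},\cT_{21}$ (hence $\cA$ or $\cB$) it belongs to, according to whether $\ell\cap\pi$ lies in $\pi_0$ or $\pi_1$ and to the square class of $-\lambda_i$.

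Summing contributions yields $|\Pi\cap\cA|$ and $|\Pi\cap\cB|$ for every configuration; adding the resulting shift to the appropriate $\cL'$-character one checks that the outcome lies in the eight-element set of the statement. The main obstacle is combinatorial bookkeeping: for a secant intersection $C_i=\Pi\cap\cE_{\lambda_i}$, the number of tangent lines of $C_i$ through a given point $P\in m$ is $0$, $1$ or $2$ according as $P$ is external to, on, or internal to $C_i$, and which of the three alternatives occurs depends jointly on whether $P\in\pi_0$ or $\pi_1$ and on whether $\lambda_i$ is a square. When $U_3\in\Pi$ one additionally has to account for the $q+1$ lines of $\Pi$ through $U_3$, which are simultaneously tangent to both $\cE_{\lambda_i}$ at $U_3$ but contribute nothing to $\cA\cup\cB$ because their only point in $\pi$ is $U_3\notin\pi_0\cup\pi_1$.
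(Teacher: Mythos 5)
Your skeleton is the right one and is essentially the paper's: write $|\Pi\cap\cL'''|=|\Pi\cap\cL'|-|\Pi\cap\cA|+|\Pi\cap\cB|$ (valid since $\cA\subset\cL_3\subset\cL'$ and $\cB\subset\cL_2$ is disjoint from $\cL'$), dispose of $\pi$ separately, and for every other plane count the tangent lines to $\cE_{\lambda_1}$ and $\cE_{\lambda_2}$ it contains and sort them into $\cT_{10},\cT_{11},\cT_{20},\cT_{21}$ according to whether they meet $\pi_0$ or $\pi_1$. The gap lies in the step that actually produces the eight values. For a plane $\Pi$ secant to $\cE_{\lambda_i}$ you must decide how many of the $q+1$ tangents to the conic $C_i=\Pi\cap\cE_{\lambda_i}$ meet $m=\Pi\cap\pi$ in $\pi_0$ and how many in $\pi_1$, i.e.\ which points of $m$ are external to $C_i$. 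You assert that this ``depends jointly on whether $P\in\pi_0$ or $\pi_1$ and on whether $\lambda_i$ is a square'', but that is false: the external/internal status of a fixed $P\in m$ with respect to $C_i$ depends on $\Pi$, since the number of tangents to $\cE_{\lambda_i}$ through $P$ lying in $\Pi$ equals $|\Pi\cap P^{\perp_{\lambda_i}}\cap\cE_{\lambda_i}|$, which varies as $\Pi$ rotates about $P$. The paper's computation confirms this: there are planes, both secant to $\cE_{\lambda_1}$ and both meeting $\pi$ in a line with the same $(q+1)/2$--$(q+1)/2$ split, in one of which all $q+1$ tangents to $C_1$ lie in $\cT_{10}$ and in the other all lie in $\cT_{11}$. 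The missing ingredient is the pencil structure: every plane not through $U_3$ is the tangent plane of exactly one quadric $\cE_{\lambda_3}$ of $\cP$ (the dual of Lemma \ref{tang}), and the distribution is governed by the quadratic characters of $\lambda_3$, $\lambda_3-\lambda_1$, $\lambda_3-\lambda_2$ together with $q\bmod 4$, read off from the orbit ($O_s^i$ or $O_n^i$) of the pole $(P^{\perp_{\lambda_3}})^{\perp_{\lambda_i}}$ via Lemma \ref{square1} applied to $\cE_{\lambda_i}$. Your three classification axes --- whether $U_3\in\Pi$, tangency versus secancy with each $\cE_{\lambda_i}$, and the $\pi_0/\pi_1$ split of $m$ (which for planes not through $U_3$ is always $(q+1)/2$ each, hence carries no information) --- cannot separate these sub-cases, so the bookkeeping cannot reach characters such as $\frac{q(q-1)}{2}\pm(q+1)$ or $\frac{q(q-1)}{2}+3(q+1)$.

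Two smaller slips. First, external points of a conic lie on two tangents and internal points on none, so your ``$0$, $1$ or $2$ according as $P$ is external to, on, or internal to $C_i$'' is reversed. Second, when $U_3\in\Pi$ only the line $m=\Pi\cap\pi$ is tangent to the quadrics at $U_3$; the other $q$ lines of $\Pi$ through $U_3$ are secant to every $\cE_\lambda$. Your conclusion that lines through $U_3$ contribute nothing to $\cA\cup\cB$ is nevertheless correct, for the reason you give: their only point of $\pi$ is $U_3\notin\pi_0\cup\pi_1$ (and $m$ itself meets $\pi_0\cup\pi_1$ in $q$ points rather than one).
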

\begin{proof}
Note that if $\sigma$ is a plane distinct from $\pi$ and not containing $U_3$, then $\sigma = P^{\perp_{\lambda_i}}$, for some $P \in \cE_{\lambda_i} \setminus \{U_3\}$. In particular we may assume that $P = (0,0,-\lambda_i,1)$.

The plane $\pi$ contains $q^2+(q+1)/2$ lines of $\cL'''$. 

Let $\sigma$ be a plane distinct from $\pi$. 

If $\sigma \cap \pi \in \cL_0$, then $\sigma$ contains $q$ lines of $\cT_{20}$ and of $\cT_{10}$ and no line of $\cT_{11}$ and of $\cT_{21}$. Since $\sigma$ contains $q(q-1)/2 + q+1$ lines of $\cL'$, we have that $\sigma$ contains $q(q-1)/2 + q+1$ lines of $\cL'''$. 

If $\sigma \cap \pi \in \cL_1$, then $\sigma$ contains $q$ lines of $\cT_{11}$ and of $\cT_{21}$ and no line of $\cT_{20}$ and of $\cT_{10}$. Since $\sigma$ contains $q(q-1)/2$ lines of $\cL'$, we have that $\sigma$ contains $q(q-1)/2$ lines of $\cL'''$. 

Let $\sigma = P^\perp$, with $P \in \cE \setminus \{U_3\}$. Consider the tactical configuration whose points are the $q^2$ planes tangent to $\cE$ at some point of $\cE \setminus \{U_3\}$ and whose blocks are the $q^2(q+1)/2$ lines contained in $\cT_{11}$ (in $\cT_{20}$). It turns out that $\sigma$ contains $q+1$ lines of $\cT_{11}$ ($\cT_{20}$). Since $\sigma$ contains $q^2+(q+1)/2$ lines of $\cL'$, we have that $\sigma$ contains $q^2-3(q+1)/2$ lines of $\cL'''$. 

\smallskip
\fbox{Let $\sigma = P^{\perp_{\lambda_1}}, P \in \cE_{\lambda_1} \setminus \{U_3\}$.}
\smallskip

Assume that $q \equiv 1 \pmod 4$. Taking into account Lemma \ref{square1}, we have that $\sigma$ contains no line of $\cL_0$. 

If $\lambda_1 - \lambda_2$ is a non--square, then $(P^{\perp_{\lambda_1}})^{\perp_{\lambda_2}} \in O_n^2$. The plane $\sigma$ contains $(q+1)/2$ lines of $\cT_{11}$ and of $\cT_{10}$, no line of $\cT_{21}$ and $q+1$ lines of $\cT_{20}$. Since $\sigma$ contains $q(q-1)/2$ lines of $\cL'$, we have that $\sigma$ contains $q(q-1)/2$ lines of $\cL'''$. 

If $\lambda_1 - \lambda_2$ is a square, then $(P^{\perp_{\lambda_1}})^{\perp_{\lambda_2}} \in O_s^2$. The plane $\sigma$ contains $(q+1)/2$ lines of $\cT_{11}$ and of $\cT_{10}$, no line of $\cT_{20}$ and $q+1$ lines of $\cT_{21}$. Since $\sigma$ contains $q(q-1)/2$ lines of $\cL'$, we have that $\sigma$ contains $q(q-1)/2+q+1$ lines of $\cL'''$.

Assume that $q \equiv -1 \pmod 4$. Taking into account Lemma \ref{square1}, we have that $\sigma$ contains $q+1$ lines of $\cL_0$.

If $\lambda_1 - \lambda_2$ is a non--square, then $(P^{\perp_{\lambda_1}})^{\perp_{\lambda_2}} \in O_n^2$. The plane $\sigma$ contains $(q+1)/2$ lines of $\cT_{11}$ and of $\cT_{10}$, no line of $\cT_{21}$ and $q+1$ lines of $\cT_{20}$. Since $\sigma$ contains $q(q-1)/2$ lines of $\cL'$, we have that $\sigma$ contains $q(q-1)/2$ lines of $\cL'''$. 

If $\lambda_1 - \lambda_2$ is a square, then $(P^{\perp_{\lambda_1}})^{\perp_{\lambda_2}} \in O_s^2$. The plane $\sigma$ contains $(q+1)/2$ lines of $\cT_{11}$ and of $\cT_{10}$, no line of $\cT_{20}$ and $q+1$ lines of $\cT_{21}$. Since $\sigma$ contains $q(q-1)/2$ lines of $\cL'$, we have that $\sigma$ contains $q(q-1)/2+q+1$ lines of $\cL'''$.

\smallskip
\fbox{Let $\sigma = P^{\perp_{\lambda_2}}, P \in \cE_{\lambda_2} \setminus \{U_3\}$.}
\smallskip

Assume that $q \equiv 1 \pmod 4$. Taking into account Lemma \ref{square1}, we have that $\sigma$ contains $q+1$ lines of $\cL_0$. 

If $\lambda_2 - \lambda_1$ is a non--square, then $(P^{\perp_{\lambda_2}})^{\perp_{\lambda_1}} \in O_n^1$. The plane $\sigma$ contains $(q+1)/2$ lines of $\cT_{20}$ and of $\cT_{21}$, no line of $\cT_{11}$ and $q+1$ lines of $\cT_{10}$. Since $\sigma$ contains $q(q-1)/2+q+1$ lines of $\cL'$, we have that $\sigma$ contains $q(q-1)/2+2(q+1)$ lines of $\cL'''$. 

If $\lambda_2 - \lambda_1$ is a square, then $(P^{\perp_{\lambda_2}})^{\perp_{\lambda_1}} \in O_s^1$. The plane $\sigma$ contains $(q+1)/2$ lines of $\cT_{20}$ and of $\cT_{21}$, no line of $\cT_{10}$ and $q+1$ lines of $\cT_{11}$. Since $\sigma$ contains $q(q-1)/2+q+1$ lines of $\cL'$, we have that $\sigma$ contains $q(q-1)/2$ lines of $\cL'''$.

Assume that $q \equiv -1 \pmod 4$. Taking into account Lemma \ref{square1}, we have that $\sigma$ contains no line of $\cL_0$.

If $\lambda_2 - \lambda_1$ is a non--square, then $(P^{\perp_{\lambda_2}})^{\perp_{\lambda_1}} \in O_n^1$. The plane $\sigma$ contains $(q+1)/2$ lines of $\cT_{20}$ and of $\cT_{21}$, no line of $\cT_{10}$ and $q+1$ lines of $\cT_{11}$. Since $\sigma$ contains $q(q-1)/2$ lines of $\cL'$, we have that $\sigma$ contains $q(q-1)/2-(q+1)$ lines of $\cL'''$. 

If $\lambda_2 - \lambda_1$ is a square, then $(P^{\perp_{\lambda_2}})^{\perp_{\lambda_1}} \in O_s^1$. The plane $\sigma$ contains $(q+1)/2$ lines of $\cT_{20}$ and of $\cT_{21}$, no line of $\cT_{11}$ and $q+1$ lines of $\cT_{10}$. Since $\sigma$ contains $q(q-1)/2$ lines of $\cL'$, we have that $\sigma$ contains $q(q-1)/2+q+1$ lines of $\cL'''$.

\smallskip
\fbox{Let $\sigma = P^{\perp_{\lambda_3}}, P \in \cE_{\lambda_3} \setminus \{U_3\}$, $\lambda_3 \in \GF(q) \setminus \{0, \lambda_1, \lambda_2\}$.}
\smallskip

Taking into account Lemma \ref{square1}, we have that $(P^{\perp_{\lambda_3}})^{\perp} \in O_s$ if and only if $\lambda_3$ is a square in $\GF(q)$. 

Assume that $\lambda_3$ is a square in $\GF(q)$. The following possibilities arise: 

\begin{itemize}
\item $\lambda_3 - \lambda_1$, $\lambda_3-\lambda_2$ are squares in $\GF(q)$ and then $(P^{\perp_{\lambda_3}})^{\perp_{\lambda_1}} \in O_s^1$ and $(P^{\perp_{\lambda_3}})^{\perp_{\lambda_2}} \in O_s^2$. If $q \equiv -1 \pmod 4$, $\sigma$ contains $q+1$ lines of $\cT_{20}$ and of $\cT_{10}$ and no line of $\cT_{21}$ and of $\cT_{11}$. Since $\sigma$ contains $q(q-1)/2 + q+1$ lines of $\cL'$, we have that $\sigma$ contains $q(q-1)/2+q+1$ lines of $\cL'''$.  If $q \equiv 1 \pmod 4$, $\sigma$ contains $q+1$ lines of $\cT_{11}$ and of $\cT_{21}$ and no line of $\cT_{20}$ and of $\cT_{10}$. Since $\sigma$ contains $q(q-1)/2$ lines of $\cL'$, we have that $\sigma$ contains $q(q-1)/2$ lines of $\cL'''$; 

\item $\lambda_3 - \lambda_1$ is a non--square and $\lambda_3-\lambda_2$ is a square in $\GF(q)$, then $(P^{\perp_{\lambda_3}})^{\perp_{\lambda_1}} \in O_n^1$ and $(P^{\perp_{\lambda_3}})^{\perp_{\lambda_2}} \in O_s^2$. If $q \equiv -1 \pmod 4$, $\sigma$ contains $q+1$ lines of $\cT_{20}$ and of $\cT_{11}$ and no line of $\cT_{21}$ and of $\cT_{10}$. Since $\sigma$ contains $q(q-1)/2 + q+1$ lines of $\cL'$, we have that $\sigma$ contains $q(q-1)/2-(q+1)$ lines of $\cL'''$.  If $q \equiv 1 \pmod 4$, $\sigma$ contains $q+1$ lines of $\cT_{21}$ and of $\cT_{10}$ and no line of $\cT_{20}$ and of $\cT_{11}$. Since $\sigma$ contains $q(q-1)/2$ lines of $\cL'$, we have that $\sigma$ contains $q(q-1)/2+2(q+1)$ lines of $\cL'''$; 

\item $\lambda_3 - \lambda_1$ is a square and $\lambda_3-\lambda_2$ is a non--square in $\GF(q)$, then $(P^{\perp_{\lambda_3}})^{\perp_{\lambda_1}} \in O_s^1$ and $(P^{\perp_{\lambda_3}})^{\perp_{\lambda_2}} \in O_n^2$. If $q \equiv -1 \pmod 4$, $\sigma$ contains $q+1$ lines of $\cT_{21}$ and of $\cT_{10}$ and no line of $\cT_{20}$ and of $\cT_{11}$. Since $\sigma$ contains $q(q-1)/2 + q+1$ lines of $\cL'$, we have that $\sigma$ contains $q(q-1)/2+3(q+1)$ lines of $\cL'''$.  If $q \equiv 1 \pmod 4$, $\sigma$ contains $q+1$ lines of $\cT_{20}$ and of $\cT_{11}$ and no line of $\cT_{21}$ and of $\cT_{10}$. Since $\sigma$ contains $q(q-1)/2$ lines of $\cL'$, we have that $\sigma$ contains $q(q-1)/2-2(q+1)$ lines of $\cL'''$; 

\item $\lambda_3 - \lambda_1$, $\lambda_3-\lambda_2$ are non--squares in $\GF(q)$ and then $(P^{\perp_{\lambda_3}})^{\perp_{\lambda_1}} \in O_n^1$ and $(P^{\perp_{\lambda_3}})^{\perp_{\lambda_2}} \in O_n^2$. If $q \equiv -1 \pmod 4$, $\sigma$ contains $q+1$ lines of $\cT_{11}$ and of $\cT_{21}$ and no line of $\cT_{20}$ and of $\cT_{10}$. Since $\sigma$ contains $q(q-1)/2 + q+1$ lines of $\cL'$, we have that $\sigma$ contains $q(q-1)/2+q+1$ lines of $\cL'''$.  If $q \equiv 1 \pmod 4$, $\sigma$ contains $q+1$ lines of $\cT_{20}$ and of $\cT_{10}$ and no line of $\cT_{21}$ and of $\cT_{11}$. Since $\sigma$ contains $q(q-1)/2$ lines of $\cL'$, we have that $\sigma$ contains $q(q-1)/2$ lines of $\cL'''$; 
\end{itemize}

Assume that $\lambda_3$ is a non--square in $\GF(q)$. Arguing as above, we have the following possibilities: 

\begin{itemize}
\item $\lambda_3 - \lambda_1$, $\lambda_3-\lambda_2$ are squares in $\GF(q)$. In this case $\sigma$ contains $q(q-1)/2$ or $q(q-1)/2+q+1$ lines of $\cL'''$, according as $q \equiv -1 \pmod 4$ or $q \equiv 1 \pmod 4$;

\item $\lambda_3 - \lambda_1$ is a non--square and $\lambda_3-\lambda_2$ is a square in $\GF(q)$. In this case $\sigma$ contains $q(q-1)/2-2(q+1)$ or $q(q-1)/2+3(q+1)$ lines of $\cL'''$, according as $q \equiv -1 \pmod 4$ or $q \equiv 1 \pmod 4$;
 
\item $\lambda_3 - \lambda_1$ is a square and $\lambda_3-\lambda_2$ is a non--square in $\GF(q)$. In this case $\sigma$ contains $q(q-1)/2+2(q+1)$ or $q(q-1)/2-(q+1)$ lines of $\cL'''$, according as $q \equiv -1 \pmod 4$ or $q \equiv 1 \pmod 4$;

\item $\lambda_3 - \lambda_1$, $\lambda_3-\lambda_2$ are non--squares in $\GF(q)$. In this case $\sigma$ contains $q(q-1)/2$ or $q(q-1)/2+q+1$ lines of $\cL'''$, according as $q \equiv -1 \pmod 4$ or $q \equiv 1 \pmod 4$.
\end{itemize}
The proof is now complete.
\end{proof}

\begin{prop}\label{char2}
The characters of $\cL'''$, with respect to line--stars of $\PG(3,q)$ form a subset of:
$$
\left\{ \frac{q+1}{2}, \frac{5(q+1)}{2}, \frac{q(q+1)}{2}-2(q+1), \frac{q(q+1)}{2}-(q+1), \right.
$$
$$
\left. \frac{q(q+1)}{2}, \frac{q(q+1)}{2}+q+1, \frac{q(q+1)}{2}+2(q+1), \frac{q(q+1)}{2}+3(q+1) \right\}.
$$ 
\end{prop}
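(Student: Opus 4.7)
The plan is to mirror Proposition~\ref{char1}, but counting line--stars through a point $P$ instead of line--sets in planes. Writing $n_{\cX}(P)$ for the number of lines of $\cX$ through $P$, and using that $\cA\subset\cL_3\subset\cL'$ while $\cB\subset\cL_2$ is disjoint from $\cL'$, one has
$$
n_{\cL'''}(P) = n_{\cL'}(P) - n_{\cA}(P) + n_{\cB}(P).
$$
The first summand is read off from the point--tactical decomposition of $G$ in Section~2: it equals $(q+1)/2$ for $P\in\cE$, $q(q+1)/2+(q+1)$ for $P\in O_s$, and $q(q+1)/2$ for $P\in O_n$.

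To handle $n_{\cA}(P)$ and $n_{\cB}(P)$, one counts the tangent lines from $P$ to $\cE_{\lambda_1}$ and to $\cE_{\lambda_2}$, splits them according to which of $\pi_0,\pi_1$ they meet, and then identifies each as a member of $\cT_{10},\cT_{11},\cT_{20}$, or $\cT_{21}$ via Lemma~\ref{lemma3}. The decisive geometric input is the following dichotomy: if $P\in\cE_{\lambda_3}\setminus\{U_3\}$ with $\lambda_3\neq\lambda_i$, then all $q+1$ tangent lines from $P$ to $\cE_{\lambda_i}$ meet $\pi$ in points of $\pi_0$ or in points of $\pi_1$ in a single block, according to whether $\lambda_i-\lambda_3$ is a non--zero square or a non--square in $\GF(q)$. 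One proves this by exploiting transitivity of $\Psi$ on $\cE_{\lambda_3}\setminus\{U_3\}$ to reduce to $P=(0,0,-\lambda_3,1)$, computing $\cE_{\lambda_i}\cap P^{\perp_{\lambda_i}}$ as the conic $X_1^2-\omega X_2^2+(\lambda_3-\lambda_i)X_4^2=0$ inside the plane $X_3=(\lambda_3-2\lambda_i)X_4$, and checking that for each point $R$ of this conic the intersection $PR\cap\pi$ has first two coordinates $(X_1,X_2)$ satisfying $X_1^2-\omega X_2^2=(\lambda_i-\lambda_3)X_4^2$. If instead $P\in\cE_{\lambda_i}\setminus\{U_3\}$, the $q+1$ tangent lines at $P$ lie in $P^{\perp_{\lambda_i}}$, whose trace on $\pi$ is a line carrying $(q+1)/2$ points of each of $\pi_0,\pi_1$ (a standard count on the anisotropic form $X_1^2-\omega X_2^2$ of $\GF(q^2)/\GF(q)$).

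One then runs a case analysis over the positions of $P$ in the pencil $\cP$: $P=U_3$; $P\in\pi_0$ or $P\in\pi_1$; $P\in\cE\setminus\{U_3\}$; $P\in\cE_{\lambda_1}\setminus\{U_3\}$; $P\in\cE_{\lambda_2}\setminus\{U_3\}$; and $P\in\cE_{\lambda_3}\setminus\{U_3\}$ with $\lambda_3\in\GF(q)\setminus\{0,\lambda_1,\lambda_2\}$. In each case the above tools deliver $n_{\cA}(P)$ and $n_{\cB}(P)$, while Remark~\ref{square} fixes the $G$--orbit of $P$ (hence $n_{\cL'}(P)$), bringing in a $q\bmod 4$ dependence through the link between the quadratic characters of $-\lambda_3$ and $\lambda_3$. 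In the generic final case the answer has the shape $q(q+1)/2+k(q+1)$ with $k\in\{-2,-1,0,1,2,3\}$, one value for each configuration of the three signs of $-\lambda_3$, $\lambda_1-\lambda_3$, $\lambda_2-\lambda_3$; together with $(q+1)/2$ (from $U_3$) and $5(q+1)/2$ (from $\cE\setminus\{U_3\}$), these cover precisely the eight numbers in the statement.

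The main obstacle is the bookkeeping in this last case, with up to eight sub--configurations to reconcile with the list of eight values and the two residue classes of $q$ modulo $4$. No new geometric input is required beyond the dichotomy above; the rest mirrors the arithmetic already carried out in Proposition~\ref{char1}.
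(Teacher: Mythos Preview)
Your proposal is correct and follows essentially the same route as the paper: both compute $n_{\cL'''}(P)=n_{\cL'}(P)-n_{\cA}(P)+n_{\cB}(P)$, read off $n_{\cL'}(P)$ from the $G$--orbit of $P$, and run the same case split over the pencil $\cP$. The only cosmetic difference is that where you establish the dichotomy on the $\pi_0/\pi_1$ trace of the tangents from $P$ to $\cE_{\lambda_i}$ by an explicit coordinate computation, the paper invokes Remark~\ref{square} (so that $P\in O_s^i$ iff $\lambda_i-\lambda_3\in\Box_q$) together with Remark~\ref{oss1}, and handles the $P\in\cE\setminus\{U_3\}$ case by a tactical--configuration double count rather than by specialising $\lambda_3=0$ in the dichotomy.
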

\begin{proof}
Through the point $U_3$, there pass $(q+1)/2$ lines of $\cL'''$. 

Let $P \in \cE \setminus \{U_3\}$. Consider the tactical configuration whose points are the $q^2$ points of $\cE \setminus \{U_3\}$ and whose blocks are the $q^2(q+1)/2$ lines contained in $\cT_{10}$ (in $\cT_{21}$). It turns out that through $P$ there pass $q+1$ lines of $\cT_{10}$ ($\cT_{21}$). Since $P$ is on $q+1$ lines of $\cL'$, we have that $P$ is on $5(q+1)/2$ lines of $\cL'''$. 

Let $P \in \pi_0$. Through $P$ there pass $q$ lines of $\cT_{10}$ and of $\cT_{20}$ and no line of $\cT_{11}$ and of $\cT_{21}$. Since $P$ is on $q(q+1)/2 + q+1$ lines of $\cL'$, we have that $P$ is on $q(q+1)/2 + q+1$ lines of $\cL'''$. 

Let $P \in \pi_1$. Through $P$ there pass $q$ lines of $\cT_{11}$ and of $\cT_{21}$ and no line of $\cT_{10}$ and of $\cT_{20}$. Since $P$ is on $q(q+1)/2$ lines of $\cL'$, we have that $P$ is on $q(q+1)/2$ lines of $\cL'''$. 

\smallskip
\fbox{Let $P \in \cE_{\lambda_1} \setminus \{U_3\}$.}
\smallskip

Assume that $q \equiv 1 \pmod 4$. Taking into account Remark \ref{square}, we have that $\cE_{\lambda_1} \subseteq O_s$. 

If $\lambda_2 - \lambda_1$ is a square, then $\cE_{\lambda_1} \subseteq O_s^2$. Through $P$ there pass $(q+1)/2$ lines of $\cT_{11}$ and of $\cT_{10}$, no line of $\cT_{21}$ and $q+1$ lines of $\cT_{20}$. Since $P$ is on $q(q+1)/2 + q+1$ lines of $\cL'$, we have that $P$ is on $q(q+1)/2$ lines of $\cL'''$. 

If $\lambda_2 - \lambda_1$ is a non--square, then $\cE_{\lambda_1} \subseteq O_n^2$. Through $P$ there pass $(q+1)/2$ lines of $\cT_{11}$ and of $\cT_{10}$, no line of $\cT_{20}$ and $q+1$ lines of $\cT_{21}$. Since $P$ is on $q(q+1)/2 + q+1$ lines of $\cL'$, we have that $P$ is on $q(q+1)/2 + 2(q+1)$ lines of $\cL'''$.

Assume that $q \equiv -1 \pmod 4$. Taking into account Remark \ref{square}, we have that $\cE_{\lambda_1} \subseteq O_n$. 

If $\lambda_2 - \lambda_1$ is a square, then $\cE_{\lambda_1} \subseteq O_s^2$. Through $P$ there pass $(q+1)/2$ lines of $\cT_{11}$ and of $\cT_{10}$, no line of $\cT_{21}$ and $q+1$ lines of $\cT_{20}$. Since $P$ is on $q(q+1)/2$ lines of $\cL'$, we have that $P$ is on $q(q+1)/2-(q+1)$ lines of $\cL'''$. 

If $\lambda_2 - \lambda_1$ is a non--square, then $\cE_{\lambda_1} \subseteq O_n^2$. Through $P$ there pass $(q+1)/2$ lines of $\cT_{11}$ and of $\cT_{10}$, no line of $\cT_{20}$ and $q+1$ lines of $\cT_{21}$. Since $P$ is on $q(q+1)/2$ lines of $\cL'$, we have that $P$ is on $q(q+1)/2 + q+1$ lines of $\cL'''$.

\smallskip
\fbox{Let $P \in \cE_{\lambda_2} \setminus \{U_3\}$.}
\smallskip

Assume that $q \equiv 1 \pmod 4$. Taking into account Remark \ref{square}, we have that $\cE_{\lambda_2} \subseteq O_n$. 

If $\lambda_1 - \lambda_2$ is a square, then $\cE_{\lambda_2} \subseteq O_s^1$. Through $P$ there pass $(q+1)/2$ lines of $\cT_{20}$ and of $\cT_{21}$, no line of $\cT_{11}$ and $q+1$ lines of $\cT_{10}$. Since $P$ is on $q(q+1)/2$ lines of $\cL'$, we have that $P$ is on $q(q+1)/2+q+1$ lines of $\cL'''$. 

If $\lambda_1 - \lambda_2$ is a non--square, then $\cE_{\lambda_2} \subseteq O_n^1$. Through $P$ there pass $(q+1)/2$ lines of $\cT_{20}$ and of $\cT_{21}$, no line of $\cT_{10}$ and $q+1$ lines of $\cT_{11}$. Since $P$ is on $q(q+1)/2$ lines of $\cL'$, we have that $P$ is on $q(q+1)/2-(q+1)$ lines of $\cL'''$.

Assume that $q \equiv -1 \pmod 4$. Taking into account Remark \ref{square}, we have that $\cE_{\lambda_2} \subseteq O_s$. 

If $\lambda_1 - \lambda_2$ is a square, then $\cE_{\lambda_2} \subseteq O_s^1$. Through $P$ there pass $(q+1)/2$ lines of $\cT_{20}$ and of $\cT_{21}$, no line of $\cT_{11}$ and $q+1$ lines of $\cT_{10}$. Since $P$ is on $q(q+1)/2+q+1$ lines of $\cL'$, we have that $P$ is on $q(q+1)/2+2(q+1)$ lines of $\cL'''$. 

If $\lambda_1 - \lambda_2$ is a non--square, then $\cE_{\lambda_2} \subseteq O_n^1$. Through $P$ there pass $(q+1)/2$ lines of $\cT_{20}$ and of $\cT_{21}$, no line of $\cT_{10}$ and $q+1$ lines of $\cT_{11}$. Since $P$ is on $q(q+1)/2+q+1$ lines of $\cL'$, we have that $P$ is on $q(q+1)/2$ lines of $\cL'''$.

\smallskip
\fbox{Let $P \in \cE_{\lambda_3} \setminus \{U_3\}$, $\lambda_3 \in \GF(q) \setminus \{0, \lambda_1, \lambda_2\}$.}
\smallskip

Taking into account Remark \ref{square}, we have that $P \in O_s$ if and only if $-\lambda_3$ is a square in $\GF(q)$. 

Assume that $-\lambda_3$ is a square in $\GF(q)$. The following possibilities arise: 

\begin{itemize}
\item $\lambda_1 - \lambda_3$, $\lambda_2-\lambda_3$ are squares in $\GF(q)$ and then $P \in O_s^1 \cap O_s^2$. Through $P$ there pass $q+1$ lines of $\cT_{20}$ and of $\cT_{10}$ and no line of $\cT_{21}$ and of $\cT_{11}$. Since $P$ is on $q(q+1)/2 +q+1$ lines of $\cL'$, we have that $P$ is on $q(q+1)/2+q+1$ lines of $\cL'''$;
\item $\lambda_1 - \lambda_3$ is a square and $\lambda_2-\lambda_3$ is a non--square in $\GF(q)$ and then $P \in O_s^1 \cap O_n^2$. Through $P$ there pass $q+1$ lines of $\cT_{21}$ and of $\cT_{10}$ and no line of $\cT_{20}$ and of $\cT_{11}$. Since $P$ is on $q(q+1)/2 + q+1$ lines of $\cL'$, we have that $P$ is on $q(q+1)/2+3(q+1)$ lines of $\cL'''$; 
\item $\lambda_1 - \lambda_3$ is a non--square and $\lambda_2-\lambda_3$ is a square in $\GF(q)$ and then $P \in O_n^1 \cap O_s^2$. Through $P$ there pass $q+1$ lines of $\cT_{11}$ and of $\cT_{20}$ and no line of $\cT_{21}$ and of $\cT_{10}$. Since $P$ is on $q(q+1)/2 + q+1$ lines of $\cL'$, we have that $P$ is on $q(q+1)/2-(q+1)$ lines of $\cL'''$; 
\item $\lambda_1 - \lambda_3$, $\lambda_2-\lambda_3$ are non--squares in $\GF(q)$ and then $P \in O_n^1 \cap O_n^2$. Through $P$ there pass $q+1$ lines of $\cT_{11}$ and of $\cT_{21}$ and no line of $\cT_{20}$ and of $\cT_{10}$. Since $P$ is on $q(q+1)/2 + q+1$ lines of $\cL'$, we have that $P$ is on $q(q+1)/2+q+1$ lines of $\cL'''$. 
\end{itemize}

Assume that $-\lambda_3$ is a non--square in $\GF(q)$. Arguing as above, we have the following possibilities: 

\begin{itemize}
\item $\lambda_1 - \lambda_3$, $\lambda_2-\lambda_3$ are squares in $\GF(q)$ and then $P \in O_s^1 \cap O_s^2$. In this case $P$ is on $q(q+1)/2$ lines of $\cL'''$;
\item $\lambda_1 - \lambda_3$ is a square and $\lambda_2-\lambda_3$ is a non--square in $\GF(q)$ and then $P \in O_s^1 \cap O_n^2$. In this case $P$ is on $q(q+1)/2+2(q+1)$ lines of $\cL'''$; 
\item $\lambda_1 - \lambda_3$ is a non--square and $\lambda_2-\lambda_3$ is a square in $\GF(q)$ and then $P \in O_n^1 \cap O_s^2$. In this case $P$ is on $q(q+1)/2-2(q+1)$ lines of $\cL'''$; 
\item $\lambda_1 - \lambda_3$, $\lambda_2-\lambda_3$ are non--squares in $\GF(q)$ and then $P \in O_n^1 \cap O_n^2$. In this case $P$ is on $q(q+1)/2$ lines of $\cL'''$. 
\end{itemize}
The proof is now complete.
\end{proof}

\begin{theorem}
If $q \ge 7$ odd, the Cameron--Liebler line class $\cL'''$ is not equivalent to one of the previously known examples.
\end{theorem}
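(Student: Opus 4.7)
The plan is to exploit the fact that the multiset of plane characters and the multiset of star characters of a Cameron--Liebler line class are invariants of its $\PGL(4,q)$--equivalence class. Using Propositions \ref{char1} and \ref{char2} together with the character lists recorded in Section 2, one distinguishes $\cL'''$ from each of the three known families of parameter $(q^2+1)/2$ Cameron--Liebler line classes -- the Bruen--Drudge class $\cL'$, the class $\cL''$ of \cite{CP} and \cite{GMP}, and the DDMR/FMX constructions ${\cal X}\cup{\cal Y}$ and ${\cal X}\cup{\cal Z}$ (where they exist) -- by exhibiting, for each, a character that is attained by $\cL'''$ but does not appear in the other's character set.

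For $\cL'$, whose only plane characters are $q^2+(q+1)/2,\ q(q-1)/2,\ q(q+1)/2+1$, the proof of Proposition \ref{char1} gives that every plane $\sigma=P^\perp$ with $P\in\cE\setminus\{U_3\}$ meets $\cL'''$ in exactly $q^2-3(q+1)/2$ lines, and there are $q^2$ such planes. One then verifies that $q^2-3(q+1)/2$ differs from each of the three Bruen--Drudge characters; the three resulting polynomial conditions fail only at $q=3$ and $q=5$, which is precisely where the hypothesis $q\ge 7$ enters the argument.

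For $\cL''$, whose five plane characters are listed in Section 2, the plane $\pi$ itself witnesses the character $q^2+(q+1)/2$ in $\cL'''$. A direct comparison shows that for every odd $q$ this value differs from each of the five characters of $\cL''$ -- it already exceeds the largest, $q^2+(q-1)/2$, by exactly $1$. For the DDMR/FMX variants ${\cal X}\cup{\cal Y}$ and ${\cal X}\cup{\cal Z}$, the construction embeds a full plane (respectively a full star) of lines in the class, forcing a plane (respectively star) character equal to $q^2+q+1$. Inspection of the character lists in Propositions \ref{char1} and \ref{char2} shows that the largest possible plane character of $\cL'''$ is $q^2+(q+1)/2$ and the largest possible star character is $q(q+1)/2+3(q+1)$, both strictly less than $q^2+q+1$ for $q\ge 7$; hence $\cL'''$ contains neither a full plane nor a full star and so is inequivalent to any DDMR/FMX variant.

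The main obstacle is not any single step -- each comparison reduces to an easy polynomial (in)equality -- but rather the bookkeeping needed to be certain that every distinguishing character is genuinely attained by $\cL'''$, not merely permitted by the ``subset of'' clauses in Propositions \ref{char1} and \ref{char2}. This existence check is transparent: the witnesses are the plane $\pi$, the $q^2$ planes $P^\perp$ with $P\in\cE\setminus\{U_3\}$, and the distinguished plane or star of the DDMR/FMX configuration, each of which plainly exists. The sole delicate point is the sharpness at $q=5$ of one inequality in the Bruen--Drudge comparison, which forces the uniform hypothesis $q\ge 7$.
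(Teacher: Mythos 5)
Your argument is correct and takes essentially the same route as the paper: both exhibit a character of $\cL'''$ that is guaranteed to be attained (read off from the proofs of Propositions \ref{char1} and \ref{char2}), compare it with the known character lists, and use the value $q^2+q+1$ to exclude $\cX\cup\cY$ and $\cX\cup\cZ$. The only difference is the choice of witness: the paper uses the single star character $5(q+1)/2$ at the points of $\cE\setminus\{U_3\}$ to rule out both $\cL'$ and $\cL''$ at once, while you use the plane characters $q^2-3(q+1)/2$ and $q^2+(q+1)/2$; the exceptional cases $q=3,5$ enter in the same way in either version.
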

\begin{proof}
From the proof of Proposition \ref{char2}, if $P \in \cE \setminus \{U_3\}$, through $P$ there pass $5(q+1)/2$ lines of $\cL'''$. Since, for $q \ge 7$, the value $5(q+1)/2$ does not appear among the characters of $\cL'$ and $\cL''$, we may conclude that $\cL'''$ is distinct from $\cL'$ and $\cL''$. On the other hand, both examples $\cX \cup \cY$ and $\cX \cup {\cal Z}$ admit $q^2+q+1$ as a character, but from Proposition \ref{char1} and Proposition \ref{char2}, such a value does not appear as a character of $\cL'''$.   
\end{proof}

\begin{remark}\label{oss2}
{\rm Let $\Box_q$ denote the non--zero square elements of $\GF(q)$. From the proof of Proposition \ref{char1} and of Proposition \ref{char2}, if there exist $a_1, a_2, a_3, a_4, b_1, b_2, b_3, b_4 \in \GF(q) \setminus \{0, \lambda_1, \lambda_2\}$ such that $a_1, a_2, a_3, a_4 \in \Box_q$, $b_1, b_2, b_3, b_4 \notin \Box_q$ and $a_1 - \lambda_1 \in \Box_q$, $a_1 - \lambda_2 \in \Box_q$, $a_2 - \lambda_1 \in \Box_q$, $a_2 - \lambda_2 \notin \Box_q$, $a_3 - \lambda_1 \notin \Box_q$, $a_3 - \lambda_2 \in \Box_q$, $a_4 - \lambda_1 \notin \Box_q$, $a_4 - \lambda_2 \notin \Box_q$, $b_1 - \lambda_1 \in \Box_q$, $b_1 - \lambda_2 \in \Box_q$, $b_2 - \lambda_1 \in \Box_q$, $b_2 - \lambda_2 \notin \Box_q$, $b_3 - \lambda_1 \notin \Box_q$, $b_3 - \lambda_2 \in \Box_q$, $b_4 - \lambda_1 \notin \Box_q$, $b_4 - \lambda_2 \notin \Box_q$, then $\cL'''$ has exactly eight characters with respect to line--sets in planes of $\PG(3,q)$ (line--stars of $\PG(3,q)$).}
\end{remark}

\begin{remark}
{\rm Note that both, $\cL'$ and $\cL''$, are Cameron--Liebler line classes satisfying the requirements of Theorem \ref{cl} and that, starting from $\cL'$ or $\cL''$, the replacement technique described in Theorem \ref{cl} can be iterated $(q-1)/2$ times.}
\end{remark}

\begin{remark}
{\rm Computations performed with Magma \cite{Magma} suggest that starting from $\cL'$ and applying Theorem \ref{cl} (multiple derivation is allowed), apart from Bruen--Drudge's example and the example described in \cite{CP} and \cite{GMP}, we get what follows. The notation $\alpha^i$ in the character strings below stands for: there are $i$ planes containing $\alpha$ lines of the Cameron--Liebler line class or there are $i$ line--stars containing $\alpha$ lines of the Cameron--Liebler line class. 

\smallskip
\fbox{$q=7$}
A new example arises having the characters: 

\begin{itemize}
\item[$i)$] $13^{49}, 21^{126}, 29^{77}, 37^{98}, 45^{49}, 53$ with respect to line--sets in planes of $\PG(3,7)$ and $4, 12^{49}, 20^{98}, 28^{77}, 36^{126}, 44^{49}$ with respect to line--stars of $\PG(3,7)$. 
\end{itemize}

\smallskip
\fbox{$q=9$}
Three new example arise having the characters: 

\begin{itemize}
\item[$i)$] $16^{81}, 36^{207}, 46^{288}, 56^{81}, 66^{162}, 86$ with respect to line--sets in planes of $\PG(3,9)$ and $5, 25^{162}, 35^{81}, 45^{288}, 55^{207}, 75^{81}$ with respect to line--stars of $\PG(3,9)$;
\item[$ii)$] $26^{162}, 36^{207}, 46^{126}, 56^{162}, 66^{162}, 86$ with respect to line--sets in planes of $\PG(3,9)$ and $5, 25^{162}, 35^{162}, 45^{126}, 55^{207}, 65^{162}$ with respect to line--stars of $\PG(3,9)$;  
\item[$iii)$] $26^{162}, 36^{126}, 46^{288}, 56^{162}, 76^{81}, 86$ with respect to line--sets in planes of $\PG(3,9)$ and $5, 15^{81}, 35^{162}, 45^{288}, 55^{126}, 65^{162}$ with respect to line--stars of $\PG(3,9)$.
\end{itemize}

\smallskip
\fbox{$q=11$}
Five new example arise having the characters: 

\begin{itemize}
\item[$i)$] $31^{121}, 43^{121}, 55^{308}, 67^{429}, 79^{242}, 91^{121}, 103^{121}, 127$ with respect to line--sets in planes of $\PG(3,11)$ and $6, 30^{121}, 42^{121}, 54^{242}, 66^{429}, 78^{308}, 90^{121}, 102^{121}$ with respect to line--stars of $\PG(3,11)$, see Remark \ref{oss2};
\item[$ii)$] $43^{242}, 55^{550}, 67^{187}, 79^{121}, 91^{242}, 103^{121}, 127$ with respect to line--sets in planes of $\PG(3,11)$ and $6, 30^{121}, 42^{242}, 54^{121}, 66^{187}, 78^{550}, 90^{242}$ with respect to line--stars of $\PG(3,11)$;
\item[$iii)$] $43^{242}, 55^{429}, 67^{308}, 79^{363}, 115^{121}, 127$ with respect to line--sets in planes of $\PG(3,11)$ and $6, 18^{121}, 54^{363}, 66^{308}, 78^{429}, 90^{242}$ with respect to line--stars of $\PG(3,11)$;
\item[$iv)$] $31^{121}, 43^{242}, 55^{187}, 67^{187}, 79^{484}, 91^{242}, 127$ with respect to line--sets in planes of $\PG(3,11)$ and $6, 42^{242}, 54^{484}, 66^{187}, 78^{187}, 90^{242},102^{121}$ with respect to line--stars of $\PG(3,11)$;
\item[$v)$] $19^{121}, 55^{429}, 67^{308}, 79^{363}, 91^{242}, 127$ with respect to line--sets in planes of $\PG(3,11)$ and $6, 42^{242}, 54^{363}, 66^{308}, 78^{429}, 114^{121}$ with respect to line--stars of $\PG(3,11)$;

\end{itemize}

Interestingly, if we start from $\cL''$, we get the complements of the abovementioned examples. In general, it seems a difficult task to determine how many inequivalent examples of Cameron--Liebler line classes arise from Theorem \ref{cl} and we leave it as an open problem.}
\end{remark}

\par\noindent A. Cossidente\\ Dipartimento di Matematica Informatica ed Economia\\
Universit\`a della Basilicata\\ Contrada Macchia Romana\\ 85100
Potenza (ITALY),\\antonio.cossidente@unibas.it
\newline
\\
\noindent F. Pavese\\ Dipartimento di Meccanica, Matematica e Management \\
Politecnico di Bari\\ Via Orabona, 4\\ 70125
Bari (ITALY),\\francesco.pavese@poliba.it

    \end{document}